\tiny\color{gray},
\DeclareMathOperator{\codim}{codim}
\DeclareMathOperator{\stab}{Stab}
\newcommand{\C}{{\mathcal{D}}}
\newcommand{\QQ}{\mathbb{Q}}
\newcommand{\PP}{\mathbb{P}}
\renewcommand{\epsilon}{\varepsilon}
\newtheorem{thm}[equation]{Theorem}
\newtheorem{thmintro}{Theorem}
\crefname{thmintro}{Theorem}{Theorems}
\newtheorem{lem}[equation]{Lemma}
\newtheorem{cor}[equation]{Corollary}
\newtheorem{example}[equation]{Example}
\theoremstyle{remark}
\newtheorem{remark}[equation]{Remark}
\theoremstyle{definition}
\title[Irreducibility criteria for the preimages of a transverse variety...]{Irreducibility criteria for the preimages of a transverse variety under endomorphisms of products of elliptic curves}
\author{Riccardo Pengo}
\address{
	Riccardo Pengo - Institut für Algebra, Zahlentheorie und Diskrete Mathematik, Fakultät für Mathematik und Physik, Leibniz Universität Hannover, Welfengarten 1, 30167 Hannover, Germany}
\email{\href{mailto:pengo@math.uni-hannover.de}{\textcolor{black}{pengo@math.uni-hannover.de}}}
\author{Evelina Viada}
\address{Evelina Viada - Mathematisches Institut, Georg-August-Universität Göttingen, Bunsenstra{\ss}e 3-5, 37073 Göttingen, Germany}
\email{\href{mailto:evelina.viada@mathematik.uni-goettingen.de}{\textcolor{black}{evelina.viada@mathematik.uni-goettingen.de}}}
\date{\today}
\keywords{}
\subjclass[2020]{11G50, 14G40, 14K12}
\begin{document}

\begin{abstract}
	We provide two different proofs of an irreducibility criterion for the preimages of a transverse subvariety of a product of elliptic curves under a diagonal endomorphism of sufficiently large degree. 
	For curves, we present an arithmetic proof of the aforementioned irreducibility result, which enlightens connections to methods used in the context of the Torsion Anomalous Conjecture. On the other hand, we generalize the result for higher dimensional varieties using a more  geometric approach.
	Finally, we give some applications of these results. More precisely, we establish the irreducibility of some explicit families of polynomials, we provide new estimates for the normalized heights of certain intersections and images, and we give new lower bounds for the essential minima of images.
\end{abstract}
\maketitle

\section{Introduction}

The aim of the present note is to prove by two different methods a criterion for the irreducibility of the preimage of a transverse subvariety of a product of elliptic curves under a diagonal endomorphism. 
It is well known that to establish the irreducibility of a variety is a difficult matter. Here, we show that the assumption of transversality of the variety is a necessary and sufficient condition to ensure the irreducibility of its preimage by diagonal endomorphisms of large degree. 
In particular, we make this largeness explicit with a precise bound depending only on the partial degrees of the variety.  
Such a question, which has a geometric flavour, can in fact be approached by arithmetic means, at least in the case of curves. 
A key role is played by the precise formulas for the degrees of the preimages of subvarieties, proven by Hindry in \cite[Lemme~6]{grado} (see also \cref{lem:degree_preimage}). 

\subsection*{Notation}
We fix an algebraic closure $\overline{\QQ}$ of the rational numbers $\QQ$.
In this article, a \emph{variety} is an algebraic variety defined over $\overline{\QQ}$. 

\subsubsection*{Elliptic curves}
We fix an integer $N \geq 1$ and $N$ elliptic curves $E_1,\dots,E_N$ defined over $\overline{\QQ}$. We write
\[A_N := E_1 \times \dots \times E_N\] 
for their product.
Moreover, for each $i \in \{1,\dots,N\}$ we fix a Weierstrass model
\begin{equation} \label{eq:Weierstrass}
	E_i \colon y^2 = x^3 + A_ix + B_i,
\end{equation}
where $A_i$ and $B_i$ are algebraic integers.
These Weierstrass models induce the embedding
\begin{equation} \label{eq:fixed_embedding}
	A_N \hookrightarrow (\PP^{2})^N \hookrightarrow \PP^{m}
\end{equation}
where $m = 3^N - 1$, which is given by the composition of the product of the natural inclusions $E_i \hookrightarrow \mathbb{P}^2$ determined by the Weierstrass equations \eqref{eq:Weierstrass}, together with the Segre embedding $(\mathbb{P}^2)^N \hookrightarrow \mathbb{P}^m$.

\subsubsection*{Endomorphisms}
We write $\mathrm{End}(A_N)$ for the endomorphism ring of $A_N$ considered as an abelian variety. We recall as well that $\phi \in \mathrm{End}(A_N)$ is an \emph{isogeny} if it is surjective and has finite kernel. In this case, the \emph{degree} of $\phi$ is defined as $\deg(\phi) := \lvert \ker(\phi) \rvert$.
Every endomorphism $\phi \in \mathrm{End}(A_N)$ can be represented by a matrix $\phi = (\phi_{i,j})_{i,j = 1}^N$, where each $\phi_{i,j} \colon E_i \to E_j$ is either an isogeny or the trivial map. 
In particular, every tuple $(\alpha_1,\dots,\alpha_N) \in \prod_i \mathrm{End}(E_i)$ induces a diagonal endomorphism
\begin{equation} \label{eq:diagonal_endomorphism}
	[\alpha_1,\dots,\alpha_N] := \begin{pmatrix}
		\alpha_1 & & \\
		& \ddots & \\
		& & \alpha_N
	\end{pmatrix}.
\end{equation}

\subsubsection*{Transversality}
A subvariety $V\subset A_N$  is a \emph{translate} (respectively a torsion variety) if it is a finite union of translates of proper algebraic subgroups of $A_N$ by points (respectively by torsion points). Moreover, an irreducible subvariety $V\subset A_N$  is \emph{transverse}, (respectively \emph{weak-transverse}), if it is not contained in any translate, (respectively in any torsion variety).

\subsubsection*{Degrees} Given a sub-variety $V \subseteq \mathbb{P}^{n}$, we let $\deg(V)$ denote its degree, defined as the number of points of the intersection between $V$ and $\dim(V)$ generic hyperplanes. More generally, if $V$ is a sub-variety of a multiprojective space $\mathbb{P}^{n_1} \times \dots \times \mathbb{P}^{n_r}$, and $I = (i_1,\dots,i_r) \in \mathbb{N}^r$ is a tuple of non-negative integers such that $i_1 + \dots + i_r = \dim(V)$, one can define a multiprojective degree $\deg_I(V)$ by counting the number of points of the intersection of $V$ with $\dim(V)$ generic hyperplanes $H_1,\dots,H_{\dim(V)}$ such that for every $j \in \{1,\dots,r\}$ and every $k \in \{i_1 + \dots + i_{j-1} + 1,i_1 + \dots + i_j\}$ we have that \[H_k = \mathbb{P}^{n_1} \times \cdots \times \mathbb{P}^{n_{j-1}} \times H_k' \times \mathbb{P}^{n_{j+1}} \times \dots \times \mathbb{P}^{n_r},\] where $H_k' \subseteq \mathbb{P}^{n_j}$ is a generic hyperplane. For more details, we refer the interested reader to \cref{sec:degrees}.

\subsection*{Main results}	
	We are now ready to state the main results of this note.
	The following theorem provides a first irreducibility criterion for the preimages of transverse curves $C \subseteq A_N$.
	
\begin{thmintro} \label{thm:curves_intro}
	Let  $p_1,\dots,p_N \in \mathbb{Z}$  be  prime numbers defining the  endomorphism $[p_1,\dots,p_N]$ of $ A_N $. Then, for every transverse curve $C \subseteq A_N $  such that 
	\[
	\lvert p_j \rvert \geq \deg(C) N 3^{N - 1}
	\] 
	for each $j \in \{1,\dots,N\}$, the preimage $$[p_1,\dots,p_N]^{-1}(C)$$ is transverse (and, in particular, irreducible).
\end{thmintro}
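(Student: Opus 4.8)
The plan is to reduce the statement to the \emph{irreducibility} of $[p_1,\dots,p_N]^{-1}(C)$, and then to establish that irreducibility by combining Hindry's degree formula (\cref{lem:degree_preimage}) with the projection formula and an elementary divisibility argument.

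First I would observe that transversality of the preimage is a free consequence of its irreducibility. Write $\phi := [p_1,\dots,p_N]$, which is an isogeny. If $\phi^{-1}(C)$ were irreducible and contained in a translate $g + H$ of a proper algebraic subgroup $H \subsetneq A_N$, then applying $\phi$ would give $C = \phi(\phi^{-1}(C)) \subseteq \phi(g) + \phi(H)$, and $\phi(H)$ is again a proper algebraic subgroup, since $\phi$ is an isogeny; this contradicts the transversality of $C$. So it remains to prove that $\phi^{-1}(C)$ is irreducible.

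Assume, for a contradiction, that $\phi^{-1}(C) = Z_1 \cup \dots \cup Z_r$ with $r \geq 2$ distinct irreducible components. The morphism $\phi^{-1}(C) \to C$ is finite \'etale of degree $\deg(\phi) = \prod_j p_j^2$ and is a torsor under the translation action of $\ker(\phi) = \prod_j E_j[p_j]$; since $C$ is irreducible, hence connected, this group permutes $\{Z_1,\dots,Z_r\}$ transitively. Hence the $Z_i$ are translates of one another by torsion points, so they share the same multidegree, each restriction $\phi|_{Z_i}\colon Z_i \to C$ has the same degree $d := \deg(\phi)/r$, and $\deg(\phi^{-1}(C)) = r\deg(Z_1)$. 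Next, transversality of $C$ forces $(h_j \cdot C) > 0$ for every $j$, where $h_j$ is the pullback to $A_N$ of the hyperplane class of the $j$-th factor: otherwise $C$ would be contained in $\pi_j^{-1}(\mathrm{pt})$, a translate of a proper algebraic subgroup. Using that $\phi^{\ast} h_j$ is linearly equivalent to $p_j^2 h_j$, the projection formula applied to $\phi|_{Z_1}$ gives $p_j^2\,(h_j \cdot Z_1) = d\,(h_j\cdot C)$ for all $j$, and in particular $r \mid \bigl(\prod_{k \neq j} p_k^2\bigr)(h_j\cdot C)$. Since $|p_j| \geq \deg(C)\,N\,3^{N-1} > \deg(C) \geq (h_j\cdot C) > 0$, the prime $p_j$ does not divide $(h_j\cdot C)$. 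Combined with $r \mid \deg(\phi) = \prod_k p_k^2$, this forces $r = 1$ whenever the primes $p_1,\dots,p_N$ are pairwise distinct: writing $r = \prod_k p_k^{b_k}$ with $0 \leq b_k \leq 2$, the displayed divisibility gives $b_j = 0$ for each $j$, contradicting $r \geq 2$.

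The main obstacle I anticipate is the case in which a prime is repeated among the $p_j$ — the extreme instance being $\phi = [p,\dots,p]$ — where the divisibility bookkeeping above only limits $r$ to a proper power of the repeated primes, and a genuinely geometric input becomes necessary. The natural continuation is to use that a component $Z_1$ whose stabiliser $K_1 := \{k \in \ker(\phi) : k + Z_1 = Z_1\}$ (of order $d$) is large realises $C$ as the quotient $Z_1 / K_1$ by many torsion translations, and then to feed the identity $\deg(Z_1) = d\sum_j (h_j\cdot C)/p_j^2$, together with the lower bound $|p_j| \geq \deg(C)\,N\,3^{N-1}$, into a degree inequality that the transversality of $C$ cannot satisfy — this is presumably where the factor $N\,3^{N-1}$ is spent, balancing $\deg(\phi^{-1}(C))$ against $\deg(C)$ and the number of components. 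I expect turning ``large stabiliser'' into such a sharp contradiction (equivalently, excluding the corresponding non-trivial connected subcovers of $C$) to be the hard part; an alternative would be to argue throughout via the monodromy of $\phi^{-1}(C) \to C$, showing that the image of its fundamental group in $\ker(\phi)$ is everything once the $p_j$ are large enough.
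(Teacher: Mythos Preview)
Your approach is purely geometric (degree and divisibility), whereas the paper's proof of this theorem is arithmetic: it reduces via \cref{lem:factorization} to the single-coordinate maps $[p_j]_j$ and then, assuming reducibility, bounds the Faltings height of an unbounded family of points on $C$ using the arithmetic B\'ezout theorem \eqref{ineq:bezout}, Zhang's inequality \eqref{ineq:zhang}, and the comparison \eqref{ineq:Zimmer} between Weil and N\'eron--Tate heights. Your argument never touches heights; in fact what you have written is essentially the paper's \emph{second} proof (\cref{thm:stronger_general_varieties}, specialized to curves in \cref{cor:curves_from_varieties}), minus one key step.

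That missing step is exactly what closes your acknowledged gap for repeated primes, and it is far simpler than the stabilizer or monodromy arguments you anticipate. Instead of running the divisibility bookkeeping on the full isogeny $\phi = [p_1,\dots,p_N]$, factor it as $[p_1]_1 \circ \cdots \circ [p_N]_N$ and invoke \cref{lem:factorization}: it suffices to show that each $[p_j]_j^{-1}(C)$ is irreducible. For this single-coordinate map the kernel has order $p_j^2$, so the number $r$ of components lies in $\{1,p_j,p_j^2\}$; your projection-formula identity collapses to $r\,(h_j\cdot Z_1) = (h_j\cdot C)$ (equivalently, $r \mid d_j$ via \eqref{eq:degree_preimage_curves}), and since $0 < (h_j\cdot C) \le \deg(C) < p_j$ this forces $r=1$. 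No analysis of large stabilizers or of the monodromy image is needed: the factorization into one-coordinate maps is precisely the device that makes the divisibility argument insensitive to repetitions among the $p_j$. Note also that the constant $N\,3^{N-1}$ is never actually spent in this geometric route (only $p_j > \deg_j(C)$ is used), which is why \cref{cor:curves_from_varieties} carries a sharper hypothesis; that factor is instead consumed by the height comparisons in the paper's arithmetic proof.
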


\begin{remark}
	The transversality hypothesis required in \cref{thm:curves_intro} cannot be dropped, as the following counterexample easily shows.
	Suppose that $N = N_1 + N_2$ and let $A_{N_1} = E_1 \times \dots \times E_{N_1}$ and $A_{N_2} = E_{N_1 + 1} \times \dots \times E_{N}$. Moreover, fix a curve $C \subseteq A_{N_1}$, and let $O \in A_{N_2}(\overline{\QQ})$ be the origin.
	Then, for every $n \in \mathbb{Z}$, the preimage of $C \times \{O\}$ by the map $[n,\dots,n]$ is given by $\C\times A_{N_2}[n]$, where $\C$ is the preimage of $C$ under the multiplication by $n$ on $A_{N_1}$. Even if $\C$ will be irreducible when $\lvert n \rvert$ is large enough, the product $\C \times A_{N_2}[n]$ is clearly reducible, because its irreducible components are given by $\C\times \{T\}$, where $T \in A_{N_2}[n]$ runs over the $n$-th torsion points of $A_{N_2}$.
	Note moreover that transversality cannot be replaced by weak-transversality in the statement of \cref{thm:curves_intro}. To see this, it suffices to consider the weak-transverse curve $C \times \{P\}$, where $P \in A_{N_2}(\overline{\mathbb{Q}})$ is any non-torsion point.
\end{remark}

The previous theorem will be proven in \cref{sec:curves}. Our proof is arithmetic in nature, and relies on the arithmetic Bézout theorem of Philippon \cite{patrice} and the comparison between the height and the essential minimum of a variety, proven by Zhang \cite{Zha_Small}. 
Moreover, this proof was inspired by some of the techniques used in the proof of the Torsion Anomalous Conjecture for curves inside a product of elliptic curves (see \cite{Viada_2008} and \cite{CVV_Forum}).

The aforementioned conjecture, proposed by Bombieri, Masser and Zannier \cite{BMZ1}, is still open  for general varieties, despite several partial results. We refer to  \cite{libroZannier} or \cite{Viada_2016} for an introduction to the question.
(Weak)-transversality is the central geometric assumption in this conjecture.
Several works, such as \cite{Bombieri_Masser_Zannier_1999,Maurin_2008,Habegger_2009}, as well as the works cited in \cite[Section~3]{Viada_2016}, have pointed out the connection between (weak)-transversality and the arithmetic of a variety. 
Other results, like \cite{Viada_2021}, shed some light on how difficult and important it is to determine whether a curve is or not transverse. 
This inspired our \cref{thm:curves_intro}.

On the other hand, our arithmetic proof of \cref{thm:curves_intro} does not generalize easily to higher dimensional varieties, as we point out in \cref{rmk:arithmetic_not_generalizing}.
This prompted us to find a different, geometric proof of \cref{thm:curves_intro}, which generalizes to higher dimensional varieties and yields slightly different bounds. 
The best version of the bounds that we obtain is portrayed in \cref{thm:stronger_general_varieties}, and involves the notion of multiprojective degree, which we recall in \cref{sec:degrees}.
To ease notations, we prefer to state here a weaker version of our main result, which is a corollary of \cref{thm:stronger_general_varieties}, as we show in \cref{lem:varieties_strong_to_weak}.

\begin{thmintro} \label{thm:varieties_intro}
		Let $V \subseteq A_N$ be a transverse variety. 
		Moreover, let 
		\[
		(\alpha_1,\dots,\alpha_N) \in \prod_{j=1}^N \mathrm{End}(E_j)
		\] 
		be a tuple of isogenies such that for every prime $p \mid \deg(\alpha_1) \cdots \deg(\alpha_N)$ we have that 
		\[
			p > \dim(V)! \deg(V).
		\] 
		Then, the preimage $$ [\alpha_1,\dots,\alpha_N]^{-1}(V)$$ is transverse (and, in particular, irreducible).
\end{thmintro}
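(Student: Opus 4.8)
The plan is to prove \cref{thm:varieties_intro} (equivalently, to derive it from \cref{thm:stronger_general_varieties}) by a purely geometric argument. Write $\phi = [\alpha_1,\dots,\alpha_N]$ and let $W = \phi^{-1}(V)$. Since $\phi$ is an isogeny, hence finite and flat, $W$ is equidimensional of dimension $\dim(V)$, and its irreducible components are permuted transitively by the translation action of $\ker(\phi)$ composed with the monodromy of the cover $W \to V$. The first step is therefore to reduce the claim "$W$ is transverse'' to the claim "$W$ is irreducible'': if $W$ were reducible, each component $W'$ would satisfy $\phi(W') = V$ (because $V$ is irreducible and $\phi|_{W'}$ is still finite, so its image is a closed irreducible subvariety of the same dimension as $V$ contained in $V$), and conversely one shows that a single irreducible component which happens to be transverse forces $W$ itself to be irreducible — the point being that if $W'$ is transverse then it cannot be a translate of another component by a nonzero element of $\ker(\phi)$, because $\ker(\phi)$ is a torsion group and a transverse variety is not a translate of a torsion variety's worth of copies of itself. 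Concretely, I expect the cleanest route is: show directly that $W$ is irreducible, and separately that $W$ is not contained in any translate; the latter follows from the fact that $\phi$ maps translates to translates and $\phi(W) = V$ is transverse by hypothesis, so $W$ cannot lie in a proper translate.

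The core of the argument is the irreducibility of $W$. Here I would use the degree formula for preimages (Hindry, \cite[Lemme~6]{grado}, recalled as \cref{lem:degree_preimage}): the multiprojective degrees of $W$ are obtained from those of $V$ by multiplying, in the $j$-th slot, by a factor that is a power of $\deg(\alpha_j)$ (essentially $\deg(\alpha_j)^{2}$ per unit of dimension sitting in the $j$-th factor, since $[\alpha_j]$ acts on $E_j$ with this scaling on divisor classes). Suppose for contradiction that $W = W_1 \cup \dots \cup W_r$ with $r \geq 2$ irreducible components, each of which (by the reduction above) dominates $V$ under $\phi$. Since $\phi$ is an isogeny, $\ker(\phi)$ acts on the set of components; pick a component $W_1$, let $H_1 \leq \ker(\phi)$ be its stabilizer, and note $\phi|_{W_1} \colon W_1 \to V$ is finite of some degree $d_1 \mid \deg(\phi)$, in fact $d_1 = |\ker(\phi)|/(\text{orbit size})$ up to the separable monodromy — the key numerical point is that $\deg(\phi) = \deg(\alpha_1)\cdots\deg(\alpha_N) = \sum_{i} (\text{generic fibre size of } \phi|_{W_i})$ and each such fibre size divides $\deg(\phi)$. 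Comparing multiprojective degrees on both sides and using that each prime $p \mid \deg(\phi)$ exceeds $\dim(V)!\,\deg(V) \geq \deg(W_i)$ (bounding each component's degree by that of $V$ scaled appropriately, then back down — here the factorial comes from the Bézout-type bound relating the total degree to the product of multiprojective degrees on $(\PP^2)^N$, see \cref{sec:degrees}) produces a contradiction with the existence of a nontrivial factorization of $\deg(\phi)$ into the fibre sizes of the $W_i$: a prime $p$ dividing $\deg(\phi)$ would have to divide the degree of some proper closed subvariety of $V$, or the intersection multiplicity data, all of which are bounded by $\dim(V)!\deg(V) < p$. Hence $r = 1$.

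The main obstacle I anticipate is making precise the mechanism by which "a large prime dividing $\deg(\phi)$ cannot divide anything coming from $V$'' forces irreducibility — i.e., identifying exactly which geometric quantity attached to $V$ controls the number of components of $W$ and showing it is bounded by $\dim(V)!\deg(V)$. The honest way to do this is via the multiprojective degrees: one writes down, using \cref{lem:degree_preimage}, the vector of multidegrees of $W$, observes it equals the vector of multidegrees of $V$ scaled factor-by-factor by powers of the $\deg(\alpha_j)$, and then argues that if $W$ split off a component $W'$ then the multidegree vector of $W'$ would be a "sub-vector'' whose entries, being positive integers, are divisible by high powers of the large primes $p \mid \deg(\phi)$ in a way incompatible with $W'$ also mapping onto $V$ with degree $\leq \deg(V)$ — the total-degree of a fibre of $\phi|_{W'}$ being at most $\dim(V)!\deg(V)$ by the standard estimate $\deg(X) \leq \dim(X)! \prod \deg_I(X)$ on a product of projective spaces. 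So the real content is a careful bookkeeping of multidegrees under pullback plus the elementary observation that the relevant combinatorial quantities are bounded by $\dim(V)!\deg(V)$, strictly below every prime factor of $\deg(\phi)$. Once $W$ is shown irreducible, transversality is immediate from $\phi(W) = V$ transverse, and the deduction of \cref{thm:varieties_intro} from \cref{thm:stronger_general_varieties} is the bookkeeping in \cref{lem:varieties_strong_to_weak} comparing the multiprojective bound there with the cruder total-degree bound $\dim(V)!\deg(V)$ here.
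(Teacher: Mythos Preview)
Your reduction of transversality to irreducibility is correct (this is \cref{trairr}), though your parenthetical justification is muddled: a transverse variety \emph{can} be the translate of another transverse variety, so the sentence about $W'$ not being a translate of another component is not the reason. The real gap is in the irreducibility argument itself. Working directly with $W=\phi^{-1}(V)$, you correctly note that the number $r$ of components divides both $\deg(\phi)=\prod_j\deg(\alpha_j)$ and $\deg(W)$. But the degree formula (\cref{lem:degree_preimage}) gives
\[
\deg(W)=\dim(V)!\sum_{J}\Bigl(\prod_{k\notin J}\deg(\alpha_k)\Bigr)\deg_{I_J}(V),
\]
and for a prime $p\mid r$ (hence $p\mid\deg(\alpha_{j_0})$ for some $j_0$), \emph{most} terms in this sum are already divisible by $p$; there is no single surviving residue that you can pin against $\dim(V)!\deg(V)$. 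Concretely, already when all $\alpha_j=\alpha$ one has $\deg(W)=\deg(\alpha)^{N-\dim V}\deg(V)$, and $r$ can absorb the prefactor without ever touching $\deg(V)$. Your claimed bounds $\dim(V)!\deg(V)\geq\deg(W_i)$ and $\deg(X)\leq\dim(X)!\prod_I\deg_I(X)$ are both false (degrees grow under pullback, and \eqref{eq:Hilbert_Samuel} is a \emph{sum}, not a product inequality), so the ``bookkeeping'' you describe cannot close the argument.

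The paper supplies exactly the missing idea: instead of attacking $\phi$ directly, for each $j$ one chooses a subset $J_j\ni j$ with $\lvert J_j\rvert=\dim(V)$ and considers the diagonal factor $f_j$ that is $\alpha_j$ on the coordinates in $J_j$ and the identity elsewhere. Then in the degree formula for $f_j^{-1}(V)$ every term with $J\neq J_j$ picks up a factor $\deg(\alpha_j)$, so
\[
\deg(f_j^{-1}(V))\equiv \dim(V)!\,\deg_{I_{J_j}}(V)\pmod{\deg(\alpha_j)}.
\]
If $f_j^{-1}(V)$ were reducible, the number of components would share a prime with $\deg(\alpha_j)$ and would divide $\deg(f_j^{-1}(V))$, forcing that prime to divide the single surviving term $\dim(V)!\deg_{I_{J_j}}(V)\le\dim(V)!\deg(V)$, a contradiction. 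One then invokes \cref{lem:factorization} to pass from irreducibility of each $f_j^{-1}(V)$ back to that of $\phi^{-1}(V)$. Your proposal never isolates such a single residue term, and without that isolation the divisibility contradiction does not materialise.
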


This theorem can be stated in a more direct way when $[\alpha_1,\dots,\alpha_N]$ is an integral multiple of the identity, as we show in \cref{cor:multiple_identity}.
In \cref{sec:examples}, we give some applications of our main results. More precisely, we show that the equations defining the preimages considered in \cref{thm:curves_intro,thm:varieties_intro} can quickly become very complicated.
In particular, the irreducibility guaranteed by our results is difficult to check by other means. Thus, using our theorems, we provide new families of irreducible polynomials.
Finally, we show how our theorems can be used to improve the upper bound on the height of certain intersections of $A_N$ obtained  using the arithmetic Bézout theorem \eqref{ineq:bezout}, and the lower bound on the essential minimum of a subvariety provided by a theorem of Galateau \cite{Galateau_2010}, portrayed in \eqref{ineq:Galateau}. 

\subsection*{Acknowledgments} We thank the anonymous referee for their precious comments and suggestions.

\subsection*{Funding}

The first author is grateful to the Max Planck Institute for Mathematics in Bonn for providing excellent working conditions, great hospitality and financial support.
Moreover, the first author thanks the research projects ``Motivic homotopy, quadratic invariants and diagonal classes'' (ANR-21-CE40-0015) and IRN GANDA for their financial support, and the University of Göttingen for its hospitality.

\vspace{\baselineskip}
\noindent
\framebox[\textwidth]{
	\begin{tabular*}{0.96\textwidth}{@{\extracolsep{\fill} }cp{0.84\textwidth}}
		\raisebox{-0.7\height}{%
			\begin{tikzpicture}[y=0.80pt, x=0.8pt, yscale=-1, inner sep=0pt, outer sep=0pt, 
				scale=0.12]
				\definecolor{c003399}{RGB}{0,51,153}
				\definecolor{cffcc00}{RGB}{255,204,0}
				\begin{scope}[shift={(0,-872.36218)}]
					\path[shift={(0,872.36218)},fill=c003399,nonzero rule] (0.0000,0.0000) rectangle (270.0000,180.0000);
					\foreach \myshift in 
					{(0,812.36218), (0,932.36218), 
						(60.0,872.36218), (-60.0,872.36218), 
						(30.0,820.36218), (-30.0,820.36218),
						(30.0,924.36218), (-30.0,924.36218),
						(-52.0,842.36218), (52.0,842.36218), 
						(52.0,902.36218), (-52.0,902.36218)}
					\path[shift=\myshift,fill=cffcc00,nonzero rule] (135.0000,80.0000) -- (137.2453,86.9096) -- (144.5106,86.9098) -- (138.6330,91.1804) -- (140.8778,98.0902) -- (135.0000,93.8200) -- (129.1222,98.0902) -- (131.3670,91.1804) -- (125.4894,86.9098) -- (132.7547,86.9096) -- cycle;
				\end{scope}
			\end{tikzpicture}%
		}
		&
		Riccardo Pengo received funding from the European Research Council (ERC) under the European Union’s Horizon 2020 research and innovation programme (grant agreement number 945714).
	\end{tabular*}
}

\section{Some geometric preliminaries}
\label{sec:preliminaries}

The aim of this section is to prove some basic lemmas which will be used in the proofs of both of our main results.

\subsection{Subvarieties and transversality}

First of all, we observe that any translation of a transverse subvariety of $A_N$ by a point $P \in A_N(\overline{\QQ})$ is again transverse. Moreover, the following lemma links transversality to irreducibility.

\begin{lem}\label{trairr}
	Fix $N, M \in \mathbb{Z}_{\geq 1}$. 
	Then, for any surjective morphism of abelian varieties $\phi:A_N \twoheadrightarrow A_M$, the preimage
	$\phi^{-1}(V)$ of a transverse variety $V \subseteq A_M$ is transverse if and only if it is irreducible.
\end{lem}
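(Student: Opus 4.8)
The plan is to prove both implications, with the forward direction (transverse $\Rightarrow$ irreducible) being essentially formal, and the reverse direction (irreducible $\Rightarrow$ transverse) being the one requiring an argument. For the forward direction: if $\phi^{-1}(V)$ were transverse, then by definition it is irreducible, since transverse varieties are required to be irreducible (transversality is only defined for irreducible subvarieties). So this direction is immediate from the definitions.

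For the reverse direction, suppose $\phi^{-1}(V)$ is irreducible; I want to show it is transverse, i.e. not contained in any translate of a proper algebraic subgroup of $A_N$. The key idea is to exploit surjectivity of $\phi$ together with the fact that $\phi^{-1}(V)$, being a preimage, is stable under translation by the (finite) kernel $K := \ker(\phi)$. Suppose for contradiction that $\phi^{-1}(V) \subseteq P + H$ for some proper algebraic subgroup $H \subsetneq A_N$ and some point $P \in A_N(\overline{\QQ})$. Since $\phi^{-1}(V)$ is irreducible and $\phi$ is surjective (hence a finite flat morphism, at least up to the identity component considerations — more precisely $\phi$ is a surjective morphism of abelian varieties of the same dimension, so it is an isogeny and in particular finite), the image $\phi(\phi^{-1}(V)) = V$ is irreducible and $\phi^{-1}(V) \to V$ is surjective and finite. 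Now I would translate $\phi^{-1}(V)$ by elements of $K$: for every $t \in K$, we also have $t + \phi^{-1}(V) = \phi^{-1}(V) \subseteq P + H$, hence $\phi^{-1}(V) \subseteq (P - t) + H$ for all $t \in K$. Taking any two points $x, x' \in \phi^{-1}(V)$ and using that both lie in each such translate, one can try to force the differences of kernel elements to lie in $H$; but a cleaner route is: let $H_0$ be the identity component of $H$, so $A_N / H_0$ is an abelian variety (or the quotient by $H$ gives a finite group times an abelian variety — let me instead pass to $A_N/H_0$, which is a positive-dimensional abelian variety since $H \subsetneq A_N$). The composition $\psi : A_N \xrightarrow{\phi} A_M \to ?$ is not directly available, so instead I consider the quotient map $q : A_N \to A_N/H_0$. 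The containment $\phi^{-1}(V) \subseteq P + H$ means $q(\phi^{-1}(V))$ is contained in the finite set $q(P+H) = $ (image of finitely many cosets of $H_0$), hence $q(\phi^{-1}(V))$ is finite, and by irreducibility it is a single point. So $\phi^{-1}(V) \subseteq P + H_0$ is contained in a single coset of the connected proper subgroup $H_0$.

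Now the crucial step: since $\phi$ is an isogeny (surjective morphism of abelian varieties of the same dimension — here one uses that $A_N$ and $A_M$ have the same dimension only if $N = M$; in general $\phi : A_N \twoheadrightarrow A_M$ is a surjection with $\dim A_N \geq \dim A_M$, and its fibers are translates of $(\ker\phi)^0$, a subgroup of dimension $\dim A_N - \dim A_M$). Let $G := (\ker \phi)^0$ be the identity component of the kernel. Every fiber of $\phi$ is a translate of $G$, and $\phi^{-1}(V)$ is a union of such fibers, so $\phi^{-1}(V)$ is stable under translation by $G$: that is, $G + \phi^{-1}(V) = \phi^{-1}(V)$. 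Combined with $\phi^{-1}(V) \subseteq P + H_0$, picking a point $x \in \phi^{-1}(V)$ gives $x + G \subseteq P + H_0$, hence $G \subseteq (P - x) + H_0$, and since $G$ and $H_0$ are subgroups containing $0$ appropriately (after noting $0 \in G$ forces $P - x \in H_0$, so actually $x \in P + H_0$ and then $G \subseteq H_0$). Therefore $\phi$ descends to a surjective morphism $\overline{\phi} : A_N / G \to A_M$ which is an isogeny, and $H_0 / G$ is a proper subgroup of $A_N/G$ (proper because $H_0 \subsetneq A_N$ and $G \subseteq H_0$, and one checks the inclusion stays proper). Pushing forward: $V = \phi(\phi^{-1}(V)) = \overline{\phi}(\phi^{-1}(V)/G) \subseteq \overline{\phi}(P + H_0)/G = \overline{\phi}(P) + \overline{\phi}(H_0/G)$. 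Since $\overline{\phi}$ is an isogeny, $\overline{\phi}(H_0/G)$ is a proper algebraic subgroup of $A_M$ (the image of a proper subgroup under an isogeny is again a proper subgroup, as isogenies are surjective with finite kernel, hence dimension-preserving on subgroups). Thus $V$ is contained in a translate of a proper algebraic subgroup of $A_M$, contradicting the transversality of $V$. This completes the reverse direction.

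The main obstacle I anticipate is getting the subgroup bookkeeping exactly right — in particular, ensuring that the image $\overline{\phi}(H_0/G)$ is genuinely a \emph{proper} subgroup of $A_M$, and handling the passage from $H$ to its identity component $H_0$ cleanly (using irreducibility of $\phi^{-1}(V)$ to collapse the finite quotient by $H_0$). One must also be slightly careful that $\phi : A_N \twoheadrightarrow A_M$ need not have the same source and target dimension, so "$\phi$ is an isogeny" is false in general; the right statement is that the connected component of its kernel $G$ has dimension $\dim A_N - \dim A_M$, and $A_N/G \to A_M$ is an isogeny. With that correction the argument goes through. Alternatively, one can avoid identity components entirely by working with the fact that $\phi^{-1}(V)$ is a finite union of $\phi$-fibers and directly arguing that if an irreducible such union lies in a translate of a proper subgroup, then so does $V$; but the identity-component approach makes the properness claim transparent.
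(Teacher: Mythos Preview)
Your proof is correct, but it takes a more elaborate route than the paper's. The paper's argument is: assuming $\phi^{-1}(V) \subseteq B + P$ with $B \subsetneq A_N$ proper, one observes that the \emph{entire} kernel $\ker(\phi)$ is contained in $B$ (by the same translation argument you use for $G$: pick $x \in \phi^{-1}(V)$, then for any $t \in \ker(\phi)$ both $x$ and $x+t$ lie in $B+P$, so $t \in B$), and then the subgroup correspondence theorem gives $\phi(B) \subsetneq A_M$ immediately, whence $V = \phi(\phi^{-1}(V)) \subseteq \phi(P) + \phi(B)$ contradicts transversality of $V$.

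Your detour through identity components --- replacing $H$ by $H_0$ via irreducibility, showing only $(\ker\phi)^0 \subseteq H_0$, and then quotienting to manufacture an isogeny --- is an artifact of settling for the connected component of the kernel instead of the full kernel. Once you notice that the full $\ker(\phi)$ lands inside the containing subgroup, the correspondence theorem handles properness for free, and none of your steps 2--5 are needed. In particular, your worry that $\phi$ need not be an isogeny evaporates: the paper never reduces to an isogeny, it just uses that $\phi$ is surjective and that $\ker(\phi) \subseteq B$. As a side remark, the paper's argument does not actually use irreducibility of $\phi^{-1}(V)$ anywhere in showing ``not contained in a translate''; irreducibility enters only because it is part of the definition of transverse.
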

\begin{proof}
	Transversality implies irreducibility by definition. 
	On the other hand, suppose that $\phi^{-1}(V)$ is irreducible, and assume by contradiction that there exists a translate $B + P \subsetneq A$ which contains $\phi^{-1}(V)$, where $B \subsetneq A_N$ is a proper algebraic subgroup, and $P \in A_N(\overline{\QQ})$.
	Then $B \supseteq \ker(\phi)$, and so $\phi(B) \neq A_M$ thanks to the subgroup correspondence theorem.
	Moreover, we have that 
	\[
	V = \phi(\phi^{-1}(V)) \subseteq \phi(B + P) = \phi(B)+\phi(P),
	\] 
	because $\phi$ is surjective by assumption. 
	These considerations imply that $V$ is not transverse, because $\phi(B)+\phi(P)$ is a translate containing $V$. However, this fact contradicts our assumptions, and implies therefore that $\phi^{-1}(V)$ must be transverse, as we wanted to show.
\end{proof}

This shows that, to study the transversality of preimages of transverse varieties it is sufficient to study their irreducibility.

\subsection{Action of the Kernel on the fibres}
The following lemma  describes some invariants of  the irreducible components of the preimage  of a transverse subvariety under an isogeny.
\begin{lem} \label{degcomponenti}
	Let $V \subseteq A_N$ be a transverse subvariety, and let $\phi$ be an isogeny of $A_N$.
	Then, the action of the subgroup $\ker(\phi)$ on the irreducible components of $\phi^{-1}(V)$ is transitive. 
	In particular, each irreducible component of $\phi^{-1}(V)$ surjects on $V$, and all these irreducible components have the same dimension, degree and stabilizer.
\end{lem}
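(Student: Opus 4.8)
The plan is to exploit the fact that $\ker(\phi)$ acts on the fibre $\phi^{-1}(V)$ by translations, and that this action permutes the irreducible components; transversality of $V$ will be used to rule out the action having more than one orbit. First I would recall that $\phi^{-1}(V)$ is stable under translation by any $\tau \in \ker(\phi)$, since $\phi(x+\tau) = \phi(x)$, so translation by $\tau$ sends irreducible components to irreducible components; this gives a well-defined action of the finite group $\ker(\phi)$ on the (finite) set of irreducible components of $\phi^{-1}(V)$. Write $\phi^{-1}(V) = \bigcup_{i} Z_i$ for the decomposition into irreducible components, and let $Z_1, \dots, Z_s$ be the $\ker(\phi)$-orbit of some fixed component $Z_1$. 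The union $W := \bigcup_{i=1}^s Z_i$ is then a $\ker(\phi)$-stable closed subvariety of $\phi^{-1}(V)$.

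**Key step: using transversality.** The main point is to show $W = \phi^{-1}(V)$, equivalently $\phi(W) = V$ (since $\phi$ is surjective, $\phi^{-1}(\phi(W)) = W + \ker(\phi) = W$, so $W$ is a full preimage and $W = \phi^{-1}(V)$ follows once we know $\phi(W)=V$). Now $\phi(W)$ is a closed subvariety of $V$ with $\dim \phi(W) = \dim W = \dim Z_1$. If $\dim Z_1 < \dim V$, then $\phi(Z_i) \subsetneq V$ for each $i$, and in fact each $\phi(Z_i)$ would be a proper closed subvariety; but then every component of $\phi^{-1}(V)$ would map into a proper subvariety of $V$, forcing $\phi^{-1}(V)$ itself to have dimension $< \dim V$, contradicting that $\phi$ is finite (so $\dim \phi^{-1}(V) = \dim V$). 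Hence $\dim Z_1 = \dim V$ and $\phi(Z_1) = V$ (an irreducible subvariety of $V$ of the same dimension as the irreducible variety $V$ must equal $V$). Therefore every component surjects onto $V$. For transitivity of the action: the stabilizer of $Z_1$ in $\ker(\phi)$, call it $H$, satisfies that $\phi$ restricted to $Z_1$ factors through the quotient isogeny $A_N \to A_N/H$ followed by an isogeny $A_N/H \to A_N$; the degree count $\deg(\phi|_{Z_1}) \cdot (\#\text{orbit}) = \deg(\phi)$ combined with $\phi(Z_1)=V$ would need the transversality hypothesis to conclude the orbit is everything. Concretely: if the action were not transitive there would be two $\ker(\phi)$-stable proper closed subsets partitioning the components; applying $\phi$ to one of them gives a proper closed subset of $V$ (by the same dimension argument this can't happen unless... ) — here is where I expect to need that $V$ is not a translate, because otherwise (as the counterexample in the paper shows) the fibre genuinely can split.

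**Reconciling the pieces via stabilizers.** Once transitivity is established, all components $Z_i = Z_1 + \tau_i$ for suitable $\tau_i \in \ker(\phi)$ are translates of one another, hence automatically have the same dimension and the same degree (with respect to the fixed projective embedding: translation by a point of $A_N$ is an automorphism, but one must check it preserves degrees — this holds because $A_N$ is embedded via a symmetric-enough setup, or one argues degrees are equal because $\phi$ maps each $Z_i$ onto $V$ with the same degree, and $\deg(\phi^{-1}(V))$ splits evenly; I would use \cref{lem:degree_preimage} for the total and divide). Likewise $\stab(Z_i) = \stab(Z_1 + \tau_i) = \stab(Z_1)$ since the stabilizer of a translate of a subvariety equals the stabilizer of the subvariety.

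**Main obstacle.** The delicate point is the transitivity itself, i.e.\ genuinely invoking transversality rather than merely the dimension count. The dimension argument alone shows each component surjects onto $V$, but not that $\ker(\phi)$ acts with a single orbit. The right mechanism is: the union $W$ of one orbit satisfies $\phi|_W \colon W \to V$, and $W = \phi^{-1}(\phi(W))$ forces $\phi(W) = V$; but a priori $W$ could be a proper full-preimage, i.e.\ $V$ could be pulled back from a quotient — and this is precisely excluded by $V$ being transverse (combined with \cref{trairr}, which already tells us transversality of $\phi^{-1}(V)$ is equivalent to its irreducibility, so the real content is reducing everything to a single statement about orbits). I would organize the argument so that the orbit-union $W$, being a union of components each surjecting onto $V$ and being $\ker\phi$-stable, is shown to be all of $\phi^{-1}(V)$ by a clean degree identity: $\deg(\phi^{-1}(V)) = \deg(\phi|_{Z_1})^{-1}\deg(\phi)\deg(V)/\,\text{(something)}$, forcing the number of components to equal exactly the orbit size.
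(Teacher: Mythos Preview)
Your argument is already complete by the end of the ``Key step'' paragraph, but you do not recognize it. You establish two things there: (a) every irreducible component $Z_i$ of $\phi^{-1}(V)$ surjects onto $V$, and (b) the union $W$ of a single $\ker(\phi)$-orbit is $\ker(\phi)$-stable, hence $W=\phi^{-1}(\phi(W))$. Combining these gives $\phi(W)\supseteq\phi(Z_1)=V$, so $W=\phi^{-1}(V)$, and the orbit exhausts all components. That is transitivity. Your ``Main obstacle'' paragraph is therefore based on a misconception: you worry that $W$ might be a \emph{proper} full preimage inside $\phi^{-1}(V)$, but once $\phi(W)=V$ this is impossible by definition. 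In particular, transversality of $V$ plays no role whatsoever in this lemma---only irreducibility of $V$ is used---and the paper's own proof likewise never invokes transversality (the hypothesis appears in the statement simply because the lemma is only ever applied to transverse $V$). The stabilizer and degree-identity machinery you sketch at the end is unnecessary and would only obscure the argument.

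The paper organizes the same idea slightly differently: it first singles out one component $W$ of maximal dimension with $\phi(W)=V$, and then, for any other component $W'$, lifts each point of $W'$ to a point of $W$ (using $\phi(W)=V$), concluding $W'\subseteq\bigcup_{T\in\ker\phi}(W+T)$ and hence $W'=W+T$ for some $T$ by irreducibility and maximality of dimension. Your orbit-union packaging is arguably cleaner. For the equal-degree claim, the paper simply asserts that degree is invariant under translation; this holds because the hyperplane class $H$ and its pullback under any translation are numerically equivalent on $A_N$, so there is no need to route through \cref{lem:degree_preimage} as you suggest.
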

\begin{proof}
	First of all, observe that there exists an irreducible component $W \subseteq \phi^{-1}(V)$ of maximal dimension such that $\phi(W) = V$.
	Indeed, if this was not the case, $\phi(\phi^{-1}(V)) = V$ would be a closed subvariety of $V$ which has strictly smaller dimension, which is clearly absurd.
	
	Now, let $W' \subseteq \phi^{-1}(V)$ be any other irreducible component, and pick any point $P \in W(\overline{\QQ})$. Then, there exists a point $P' \in W'(\overline{\QQ})$ such that $\phi(P) = \phi(P')$, which implies that $P = P' + T$ for some point $T \in \ker(\phi)$.
	Therefore, for every $P \in W(\overline{\mathbb{Q}})$ there exists $T \in \ker(\phi)$ such that $P \in W' + T$, or in other words $W \subseteq \bigcup_{T \in \ker(\phi)} W' + T$. Since both $W$ and $W'$ are irreducible components, and $W$ has maximal dimension, this implies that $W = W' + T$ for some $T \in \ker(\phi)$. Therefore, $\phi(W') = \phi(W) = V$, and the action of $\ker(\phi)$ on the irreducible components of $\phi^{-1}(V)$ is transitive.
	This shows in particular that all the components have the same dimension and degree, because these two quantities are invariant under translations.
	
	To conclude, show that any variety $W \subseteq A_N$ and any translate $W+T$ by a point $T \in A_N(\overline{\QQ})$ have the same stabilizer. 
	Indeed, if $Q \in \stab(W)$ then $Q + (W+T) = (Q + W) + T = W + T = W'$, which implies that $\stab(W) \subseteq \stab(W')$. Similarly, any $Q' \in \stab(W + T)$ stabilizes $W = (W+T) - T$, which allows us to conclude that $\stab(W) = \stab(W')$.
\end{proof}

\subsection{Composing multiplication maps}
The aim of this subsection is to prove a result which relates the irreducibility of the preimages of a transverse variety $V \subseteq A_N$ by different multiplication maps.
To do so, we  need to single out a specific class of factorizations of the diagonal endomorphisms $[\alpha_1,\dots,\alpha_N]$ introduced in \eqref{eq:diagonal_endomorphism}.
First of all, let us observe that each of these endomorphisms can be factored as
\begin{equation} \label{eq:canonical_factorization}
	[\alpha_1,\dots,\alpha_n] = \begin{pmatrix}
		\alpha_1 & & & \\
		& 1 & & \\
		& & \ddots & \\
		& & & 1
	\end{pmatrix} \cdot 
	\begin{pmatrix}
		1 & & & \\
		& \alpha_2 & & \\
		& & \ddots & \\
		& & & 1
	\end{pmatrix}
	\cdots \begin{pmatrix}
		1 & & & \\
		& 1 & & \\
		& & \ddots & \\
		& & & \alpha_N
	\end{pmatrix}.
\end{equation}
In other words, we have that $[\alpha_1,\dots,\alpha_N] = [\alpha_1]_1 \circ \dots \circ [\alpha_N]_N$, where we define
\[
	[\alpha]_j := \begin{pmatrix}
		1 & & & & \\
		& \ddots & & & \\
		& & \alpha & & \\
		& & & \ddots & \\
		& & & & 1
	\end{pmatrix}
\]
for every $j \in \{1,\dots,N\}$ and $\alpha \in \mathrm{End}(E_j)$.

More generally, we say that a factor of $[\alpha_1,\dots,\alpha_N]$ is \emph{admissible} if it is the composition of some of the factors $[\alpha_1]_1,\dots,[\alpha_N]_N$.
For example, writing
\[
	[\alpha_1,\dots,\alpha_5] = 
	\begin{pmatrix}
		1 & & & & \\
		& \alpha_2 & & & \\
		& & 1 & & \\
		& & & \alpha_4 & \\
		& & & & 1
	\end{pmatrix} \cdot 
	\begin{pmatrix}
		\alpha_1 & & & & \\
		& 1 & & & \\
		& & 1 & & \\
		& & & 1 & \\
		& & & & \alpha_5
	\end{pmatrix}
	\cdot
	\begin{pmatrix}
		1 & & & & \\
		& 1 & & & \\
		& & \alpha_3 & & \\
		& & & 1 & \\
		& & & & 1
	\end{pmatrix}
\]
yields a factorization of $[\alpha_1,\dots,\alpha_5]$ into admissible factors.

With this definition at hand, we show in the following lemma that the transversality of the preimage of a transverse variety by a diagonal endomorphism is equivalent to the transversality of each of its preimages under the factors of any admissible factorization. 

\begin{lem} \label{lem:factorization}
	Let $\phi = [\alpha_1,\dots,\alpha_N]$ be a diagonal endomorphism of $A_N$.
	Fix moreover a transverse variety $V \subseteq A_N$, and a factorization $\phi = f_1 \circ \dots \circ f_r$ into admissible factors.
	Then, 
	\begin{itemize}
	\item $\phi^{-1}(V)$ is transverse if and only if $f_j^{-1}(V)$ is transverse for every $j \in \{1,\dots,r\}$. \end{itemize}
		
	Equivalently,\begin{itemize} \item $\phi^{-1}(V)$ is irreducible if and only if $f_j^{-1}(V)$ is irreducible for every $j \in \{1,\dots,r\}$.
	\end{itemize}
\end{lem}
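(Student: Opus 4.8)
The plan is to first settle the equivalence of the two displayed statements, and then to prove the first one by reducing to a two-factor situation with disjoint supports. We may assume that $\phi$ is an isogeny: otherwise some $\alpha_i$ vanishes, so $\phi$ is not surjective and $V$, being transverse, is not contained in the translate $\phi(A_N)$ of a proper subgroup, whence $\phi^{-1}(V) = \emptyset$ is not transverse; the same holds for the factor $f_j$ containing the $i$-th coordinate, so the equivalence is trivial in this case. When $\phi$ is an isogeny, so is every $f_j$, and \cref{trairr} applied to $\phi$ and to each $f_j$ shows that $\phi^{-1}(V)$, respectively $f_j^{-1}(V)$, is transverse if and only if it is irreducible; this gives the asserted equivalence, so it remains to prove that $\phi^{-1}(V)$ is transverse if and only if every $f_j^{-1}(V)$ is. Since the maps $[\alpha_i]_i$ commute and the admissible factors $f_1,\dots,f_r$ of $\phi$ correspond to a partition $S_1 \sqcup \dots \sqcup S_r = \{1,\dots,N\}$ with $f_j = \prod_{i \in S_j} [\alpha_i]_i$, the $f_j$ commute and have pairwise disjoint supports; in particular $\phi = f_j \circ g_j = g_j \circ f_j$ with $g_j := \prod_{k \neq j} f_k$ again an isogeny.

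For the implication ``$\phi^{-1}(V)$ transverse $\Rightarrow$ every $f_j^{-1}(V)$ transverse'', I would fix $j$ and use $\phi^{-1}(V) = g_j^{-1}(f_j^{-1}(V))$, coming from $\phi = f_j \circ g_j$. Since $g_j$ is surjective, $f_j^{-1}(V) = g_j(\phi^{-1}(V))$ is irreducible; and if $f_j^{-1}(V)$ were contained in a translate $B + Q$ of a proper subgroup $B \subsetneq A_N$, then $\phi^{-1}(V) \subseteq g_j^{-1}(B) + Q'$ for a suitable $Q'$ with $g_j(Q') = Q$, with $g_j^{-1}(B)$ proper because $g_j$ is surjective, contradicting the transversality of $\phi^{-1}(V)$. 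This is just the argument of \cref{trairr} read backwards.

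For the converse I would argue by induction on $r$, the case $r = 1$ being trivial. For $r \geq 2$, put $\psi := f_1 \circ \dots \circ f_{r-1}$, whose admissible factors are $f_1,\dots,f_{r-1}$, so $\psi^{-1}(V)$ is transverse by the inductive hypothesis; also $f_r^{-1}(V)$ is transverse by assumption, and $\phi^{-1}(V) = f_r^{-1}(\psi^{-1}(V)) = \psi^{-1}(f_r^{-1}(V))$. Thus everything reduces to the following statement, which is the heart of the lemma: if $f$ and $h$ are diagonal isogenies of $A_N$ with disjoint supports, $\phi = f \circ h = h \circ f$, and both $f^{-1}(V)$ and $h^{-1}(V)$ are transverse, then $\phi^{-1}(V)$ is transverse. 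By \cref{trairr} it suffices to show $\phi^{-1}(V)$ is irreducible, and here I would apply \cref{degcomponenti} twice. From $\phi^{-1}(V) = h^{-1}(f^{-1}(V))$ and the transversality of $f^{-1}(V)$ one gets that $\ker(h)$ acts transitively on the set $\mathcal{C}$ of irreducible components of $\phi^{-1}(V)$, all sharing the same dimension, degree and stabilizer; symmetrically, from $\phi^{-1}(V) = f^{-1}(h^{-1}(V))$, the group $\ker(f)$ acts transitively on $\mathcal{C}$ as well. Writing $A_N = A_S \times A_T$ according to the supports $S$ of $f$ and $T$ of $h$, one has $\ker(\phi) = \ker(f) \oplus \ker(h)$ with $\ker(f) \subseteq A_S \times \{0\}$ and $\ker(h) \subseteq \{0\} \times A_T$, and every component of $\phi^{-1}(V)$ has the same image under the projection $\pi_S \colon A_N \to A_S$ (namely $\overline{\pi_S(f^{-1}(V))}$) and under $\pi_T$ (namely $\overline{\pi_T(h^{-1}(V))}$), these being transverse subvarieties of $A_S$ and $A_T$ by transversality of $V$.

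The step I expect to be the main obstacle is the final one: concluding from these facts that $\lvert \mathcal{C} \rvert = 1$. The orbit–stabilizer theorem only yields $\lvert \mathcal{C} \rvert = [\ker(f) : \ker(f) \cap G] = [\ker(h) : \ker(h) \cap G]$, where $G$ is the common stabilizer of the components, and two groups acting transitively on the same set do not by themselves force it to be a singleton; one must genuinely exploit that $V$ — and hence $f^{-1}(V)$ and $h^{-1}(V)$ — lie in no translate of a proper subgroup, together with the product decomposition $\ker(\phi) = \ker(f) \oplus \ker(h)$, in order to exclude ``diagonal'' configurations of components (of the sort one sees, for instance, for the diagonal subset of $A_S[2] \times A_T[2]$). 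Once $\lvert \mathcal{C} \rvert = 1$ is established, \cref{trairr} concludes the two-factor statement, and unwinding the induction proves the lemma.
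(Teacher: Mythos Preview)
Your overall strategy coincides with the paper's: handle the equivalence via \cref{trairr}, do the forward direction by writing $f_j^{-1}(V)$ as an image of $\phi^{-1}(V)$, and for the converse reduce by induction to the case of two admissible factors with disjoint supports, where \cref{degcomponenti} gives transitive actions of both kernels on the set $\mathcal{C}$ of components. You are also right that two transitive actions by themselves do not force $\lvert\mathcal{C}\rvert=1$; this is exactly the point where your write-up stops, and it is the one genuine gap.

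The paper does not try to rule out ``diagonal'' configurations from the pair of transitive actions abstractly. Instead it argues directly at the level of stabilizers of a single component. Fix a component $W\subseteq\phi^{-1}(V)$. From \cref{degcomponenti} one has $f_2(W)=f_1^{-1}(V)$, so $\ker(f_1)\subseteq\stab(f_2(W))$. The paper then uses admissibility: since $f_2$ is the identity on the elliptic factors supporting $\ker(f_1)$, the inclusion $\ker(f_1)\subseteq\stab(f_2(W))$ is asserted to lift to $\ker(f_1)\subseteq\stab(W)$. By symmetry $\ker(f_2)\subseteq\stab(W)$, hence $\ker(\phi)=\ker(f_1)+\ker(f_2)\subseteq\stab(W)$, and therefore $W=\phi^{-1}(V)$. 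This ``lift the stabilizer through $f_2$ because $f_2$ does not touch the relevant coordinates'' is precisely the idea your proposal is missing; once you supply it (or an equivalent argument excluding the Goursat-type subgroups you allude to), the induction closes and the proof is complete.
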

\begin{proof}  
	The equivalence of the two statements is due to \cref{trairr}. We now prove the transversality statement.
	
	First of all, let us assume that $\phi^{-1}(V)$ is transverse, which implies by definition that $\phi^{-1}(V)$ is irreducible.
	We also observe that all the endomorphisms $f_1,\dots,f_r$ commute, because they are represented by diagonal matrices.
	Therefore, for every $j \in \{1,\dots,r\}$ the variety 
	\[
	f_j^{-1}(V) = (f_1 \circ \dots \circ f_{j-1} \circ f_{j+1} \circ \dots \circ f_r)(\phi^{-1}(V))
	\]
	will also be irreducible, since irreducibility is preserved under images.
	This allows us to conclude that $f_j^{-1}(V)$ is transverse, thanks to \cref{trairr}.
	
	Conversely, suppose that $f_j^{-1}(V)$ is transverse for every $j \in \{1,\dots,r\}$. 
	In order to prove that $\phi^{-1}(V)$ is transverse, we will prove its irreducibility, which suffices thanks to \cref{trairr}. To this aim, we proceed by induction on $r$. 
	
	To deal with the base case, let us suppose that $r = 2$. Then, for any given irreducible component $W \subseteq \phi^{-1}(V)$ we have that $f_1^{-1}(V) = f_2(W)$, thanks to \cref{degcomponenti}. 
	This implies that $\stab(W) \supseteq \ker(f_1)$, because $\stab(f_2(W)) = \stab(f_1^{-1}(V)) \supseteq \ker(f_1)$, and $f_2$ acts only on components which are left unchanged by the action of $\ker(f_1)$, since the factorization $f = f_1 \circ f_2$ is admissible. Analogously, we see that $\stab(W) \supseteq \ker(f_2)$ because $f_1(W) = f_2^{-1}(V)$.
	Therefore, we have that
	\[
	\stab(W) \supseteq \stab(f_1^{-1}(V)) + \stab(f_2^{-1}(V)),
	\] 
	which also implies that $\stab(W) \supseteq \ker(f_1) + \ker(f_2) = \ker(\phi)$, because the preimage of any variety by an endomorphism is stabilized by its kernel.
	This implies the irreducibility of $\phi^{-1}(V)$, as follows.
	We observe that $\phi(W) = V$ because $V$ is irreducible, which implies that $W = \phi^{-1}(V)$, because $W$ is stabilized by $\ker(\phi)$.
	
	To conclude this proof, let us proceed with the inductive step of our proof. If $r \geq 3,$ we know by the basis of the induction that $g^{-1}(V)$ is irreducible, where $g := f_{N-1} \circ f_N$. Therefore, we see by the inductive hypothesis that $\phi^{-1}(V) = (f_1 \circ \dots \circ f_{N-2} \circ g)^{-1}(V)$ is irreducible, as we wanted to show. 
\end{proof}

\begin{remark}
	In particular, the previous lemma shows that if $\phi = [\alpha_1,\dots,\alpha_n]$ and $V \subseteq A_N$ is transverse, the irreducibility of $[\alpha_j]_j^{-1}(V)$ for every $j \in \{1,\dots,N\}$ suffices to guarantee that $V$ is itself irreducible and transverse.
\end{remark}

\subsection{Degrees of preimages}
\label{sec:degrees}

In this subsection, we first  recall a central  result of Hindry (see \cite[Lemma~6]{grado}) concerning the geometric degree of preimages of algebraic varieties under group homomorphisms. Meanwhile, we introduce some relevant notation and we prove some preliminary lemmas on degrees.

In order to do so, let us recall that for every zero cycle $\xi := \sum_{j=1}^k a_j P_j$ on an algebraic variety $X$, where $a_1,\dots,a_k \in \mathbb{Z}$ and $P_1,\dots,P_k \in X(\overline{\QQ})$, we define $\deg_0(\xi) := \sum_{j = 1}^m a_k$.
Now, fix an embedding $\iota \colon X \hookrightarrow \mathbb{P}^n$, given by a very ample divisor $H \in \mathrm{Div}(X)$. Then, we have an associated notion of degree of a closed subvariety $Y \subseteq X$, which is given by $\deg(Y) := \deg_0(Y \cdot H^{\dim(V)})$.
Moreover, suppose that $X = X_1 \times \dots \times X_r$, and that the embedding $\iota$ factors as 
\[
\iota \colon X = X_1 \times \dots \times X_r \hookrightarrow \mathbb{P}^{n_1} \times \dots \times \mathbb{P}^{n_r} \hookrightarrow \mathbb{P}^n
\] 
where the first embedding is the product of some embeddings $\iota_j \colon X_j \hookrightarrow \mathbb{P}^{n_j}$, for $j \in \{1,\dots,r\}$, which correspond to some very ample divisors $H_j' \in \mathrm{Div}(X_j)$, and the second embedding 
\[
\mathbb{P}^{n_1} \times \dots \times \mathbb{P}^{n_r} \hookrightarrow \mathbb{P}^n
\]
is the Segre embedding, so that $n = (n_1+1) \cdots (n_r + 1) - 1$.
Then, the first embedding allows one to define a new notion of multiprojective degree. 
More precisely, for every tuple $I = (i_1,\dots,i_r) \in \mathbb{N}^r$ and every subvariety $Y \subseteq X$ such that $i_1+\dots+i_r = \dim(Y)$, one defines the multiprojective degree
\begin{equation} \label{eq:multiprojective_degree}
	\deg_I(Y) := \deg_0(Y \cdot H_1^{i_1} \cdots H_n^{i_n})
\end{equation}
where $H_j := H_j' \times \prod_{i \neq j} X_j$ for every $j \in \{1,\dots,r\}$. 
Moreover, we have that $H = H_1 + \dots + H_n$, as follows from the basic properties of the Segre embedding. Therefore, we see from the multinomial theorem that the degree of any closed subvariety $Y \subseteq X$ can be expressed as
\begin{equation} \label{eq:Hilbert_Samuel}
	\deg(Y) = \dim(Y)! \sum_{I} \frac{\deg_I(Y)}{I!},
\end{equation}
where the sum runs over all the $r$-tuples $I = (i_1,\dots,i_r) \in \mathbb{N}^r$ such that $i_1 + \dots + i_r = \dim(Y)$, and $I! := i_1! \cdots i_r!$.
These and further properties of multiprojective degrees can be found in \cite[\S~3]{patrice_zeros}.

Now, let us point out that these multiprojective degrees are useful if one wants to express the degree of the preimage of a subvariety $V \subseteq A_N$ in terms of the degree of the subvariety itself, as expressed by the following lemma.

\begin{lem} \label{lem:degree_preimage}
	Let $V \subseteq A_N$ be a subvariety, and $(\alpha_1,\dots,\alpha_N) \in \prod_{j=1}^N \mathrm{End}(E_j)$. Then, we have that
	\begin{equation} \label{eq:degree_preimage}
		\deg([\alpha_1,\dots,\alpha_N]^{-1}(V)) = \dim(V)! \sum_{J} \left( \prod_{k \not\in J} \deg(\alpha_k) \right) \deg_{I_J}(V) 
	\end{equation}
	where the sum runs over the subsets $J \subseteq \{1,\dots,N\}$ with $\lvert J \rvert = \dim(V)$, and 
	\[
	I_J = (i_{J,1},\dots,i_{J,N}) \in \mathbb{N}^N
	\] is the tuple defined by setting $i_{J,j} = 1$ if $j \in J$, and $i_{J,j} = 0$ otherwise.
\end{lem}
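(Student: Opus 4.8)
The plan is to reduce to the case of a single multiplication map $[\alpha]_j$ on the factor $E_j$, using the admissible factorization \eqref{eq:canonical_factorization} and the multiplicativity of degrees of preimages under composition, and then to invoke Hindry's formula \cite[Lemma~6]{grado} on each factor. More precisely, by \eqref{eq:canonical_factorization} we have $[\alpha_1,\dots,\alpha_N] = [\alpha_1]_1 \circ \dots \circ [\alpha_N]_N$, so it suffices to understand how $\deg$ transforms under a single factor $[\alpha]_j$ and then iterate. I would first establish the base case: for a subvariety $W \subseteq A_N$ of dimension $d$, the map $[\alpha]_j$ acts as the identity on all factors except the $j$-th, where it is an isogeny of degree $\deg(\alpha_j)$; Hindry's lemma (in its multiprojective incarnation, cf.\ \cite[\S~3]{patrice_zeros}) then gives
\[
\deg_I([\alpha]_j^{-1}(W)) = \deg(\alpha_j)^{i_j} \deg_I(W)
\]
for every multi-index $I = (i_1,\dots,i_N)$ with $i_1 + \dots + i_N = d$, since pulling back the hyperplane class on the $j$-th projective factor multiplies by $\deg(\alpha_j)$ while the other classes are unaffected. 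I expect the verification of this multiprojective refinement of Hindry's formula — keeping track of which Segre hyperplane classes get scaled — to be the main technical point, although it is essentially bookkeeping once one has the projection formula for the pullback of cycles under a finite flat morphism.

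Granting the base case, I would iterate over the factors $[\alpha_1]_1,\dots,[\alpha_N]_N$. Applying the base case $N$ times, we get
\[
\deg_I([\alpha_1,\dots,\alpha_N]^{-1}(V)) = \left( \prod_{j=1}^N \deg(\alpha_j)^{i_j} \right) \deg_I(V)
\]
for every $I = (i_1,\dots,i_N) \in \mathbb{N}^N$ with $i_1 + \dots + i_N = \dim(V)$. Now I would feed this into the Hilbert–Samuel-type expansion \eqref{eq:Hilbert_Samuel}:
\[
\deg([\alpha_1,\dots,\alpha_N]^{-1}(V)) = \dim(V)! \sum_{I} \frac{1}{I!} \left( \prod_{j=1}^N \deg(\alpha_j)^{i_j} \right) \deg_I(V),
\]
the sum running over all $I \in \mathbb{N}^N$ with $\sum_j i_j = \dim(V)$. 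The final step is purely combinatorial: since each partial degree $\deg_I(V)$ with a nonzero $\deg_I(V)$ forces $i_j \in \{0,1\}$ is \emph{not} automatic, but the point is simply to re-index. A multi-index $I$ with $i_1 + \dots + i_N = \dim(V)$ need not have all entries in $\{0,1\}$; however, one observes that the right-hand side of \eqref{eq:degree_preimage} only involves the indices $I_J$ with entries in $\{0,1\}$. The resolution is that, as recalled in \cite[\S~3]{patrice_zeros}, for a subvariety of the product $(\PP^2)^N$ coming from $A_N = E_1 \times \dots \times E_N$, the relevant multiprojective degrees with respect to the factor structure $A_N = E_1 \times \dots \times E_N$ (each $E_i$ being a curve) vanish whenever some $i_j \geq 2$, since $\dim(E_j) = 1$; equivalently, one should first pass to the coarser multiprojective structure on $\PP^2 \times \dots \times \PP^2$ where Hindry's formula lives, and then note that a hyperplane class on $E_j \hookrightarrow \PP^2$ satisfies $H_j^2 = 0$ in the relevant intersection ring after restricting to $A_N$. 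Thus only the terms with $i_j \le 1$ survive, these correspond bijectively to subsets $J = \{ j : i_j = 1\}$ of size $\dim(V)$, and for such $I = I_J$ we have $I_J! = 1$ and $\prod_{j=1}^N \deg(\alpha_j)^{i_{J,j}} = \prod_{j \in J} \deg(\alpha_j) = \prod_{k \not\in J} \deg(\alpha_k)^{0} \cdot \prod_{j \in J}\deg(\alpha_j)$; rewriting $\prod_{j \in J}\deg(\alpha_j) = \left(\prod_{k=1}^N \deg(\alpha_k)\right)\big/\left(\prod_{k \notin J} \deg(\alpha_k)\right)$ is not quite what appears, so more directly one uses $\prod_{j \in J}\deg(\alpha_j)$ and matches: actually the stated formula has $\prod_{k \notin J}\deg(\alpha_k)$, which suggests the base-case scaling is by $\deg(\alpha_j)^{\,1 - i_j}$ rather than $\deg(\alpha_j)^{i_j}$.

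Let me correct the sign of the exponent, which is the one subtlety worth flagging. For the single factor $[\alpha]_j$, which is an isogeny on $E_j$ of degree $\deg(\alpha_j)$ and the identity elsewhere, the degree of the preimage of a curve inside $E_j$ scales by $\deg(\alpha_j)$ when we cut with hyperplanes \emph{transverse} to the $E_j$ direction (i.e.\ $i_j = 0$), because then the preimage genuinely covers with multiplicity $\deg(\alpha_j)$; whereas if we cut with a hyperplane in the $E_j$ direction ($i_j = 1$) the covering degree is absorbed. Carrying this through, the base case becomes $\deg_I([\alpha]_j^{-1}(W)) = \deg(\alpha_j)^{\,1-i_j}\deg_I(W)$ for $I$ with entries in $\{0,1\}$, iterating gives $\deg_{I_J}([\alpha_1,\dots,\alpha_N]^{-1}(V)) = \left(\prod_{k \notin J}\deg(\alpha_k)\right)\deg_{I_J}(V)$, and plugging into \eqref{eq:Hilbert_Samuel} with $I_J! = 1$ yields exactly \eqref{eq:degree_preimage}. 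I expect the honest derivation of this base case from \cite[Lemma~6]{grado} — getting the exponent $1 - i_j$ correct and justifying the vanishing of degrees with $i_j \geq 2$ — to be where all the real content sits; the rest is the multinomial/re-indexing bookkeeping described above.
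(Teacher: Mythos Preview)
Your proposal is correct and follows essentially the same route as the paper: the key ingredients are the vanishing $H_j^2 = 0$ on $A_N$ (so only multi-indices $I_J$ with entries in $\{0,1\}$ survive), the Hilbert--Samuel expansion \eqref{eq:Hilbert_Samuel}, and Hindry's \cite[Lemma~6]{grado} for the scaling factor $\prod_{k \notin J}\deg(\alpha_k)$. The only difference is that the paper applies Hindry's lemma directly to the full diagonal endomorphism $[\alpha_1,\dots,\alpha_N]$ rather than factoring into the $[\alpha_j]_j$ and iterating, which is slightly more direct and avoids the exponent bookkeeping you had to correct mid-argument.
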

\begin{proof}
	First of all, let us observe that for every $j \in \{1,\dots,N\}$ the divisor $H_j' \in \mathrm{Div}(E_j)$ corresponding to the embedding $E_j \hookrightarrow \mathbb{P}^2$ is simply given by $H_j' = 3 (0 \colon 1 \colon 0)$.
	Therefore, the divisor $H \in \mathrm{Div}(A_N)$ corresponding to the embedding \eqref{eq:fixed_embedding} is given by $H = H_1 + \dots + H_N$, where 
	\begin{equation} \label{eq:embedding_divisors}
		H_j := 3 \cdot E_1 \times \dots \times E_{j-1} \times \{(0 \colon 1 \colon 0)\} \times E_{j+1} \times \dots \times E_N
	\end{equation}
	for every $j \in \{1,\dots,N\}$.
	In particular, we see that $H_j^2 = 0$ for every $j \in \{1,\dots,n\}$, because in the $j$-th factor $H_j^2$ reduces to the intersection of two generic points, which is empty. 
	Therefore, we see that $\deg_I(V) = 0$ for every $I = (i_1,\dots,i_N) \in \mathbb{N}^N$ such that there exists $j \in \{1,\dots,N\}$ with $i_j \geq 2$. 
	Since all the remaining tuples are of the form $I_J$ for some subset $J \subseteq \{1,\dots,N\}$, and $I_J! = 1$ for each of these tuples, we see from \eqref{eq:Hilbert_Samuel} that
	\[
	\deg([\alpha_1,\dots,\alpha_N]^{-1}(V)) = \dim(V)! \sum_J \deg_{I_J}([\alpha_1,\dots,\alpha_N]^{-1}(V))
	\]
	where the sum runs over all the subsets $J \subseteq \{1,\dots,N\}$ such that $\lvert J \rvert = \dim(V)$.
	To conclude our proof, it suffices to observe that \[\deg_{I_J}([\alpha_1,\dots,\alpha_N]^{-1}(V)) = \deg_{I_J}(V) \cdot \prod_{k \not\in J} \deg(\alpha_k) \] for every $J \subseteq \{1,\dots,N\}$ such that $\lvert J \rvert = \dim(V)$, as follows from \cite[Lemma~6]{grado}.
\end{proof}

To conclude this subsection, we point out that one can always translate a variety with finite stabilizers in such a way that the intersection between the resulting translated variety and any family of hyperplanes used to compute a multiprojective degree $\deg_I(V)$ is a finite set of points.

\begin{lem}
	Let $V$ be a transverse variety in $A_N$ with finite stabilizer. 
	Then, there exists a closed point $P \in A_N(\overline{\QQ})$ such that for every tuple $I = (i_1,\dots,i_N) \in \mathbb{N}^N$ with $i_1+\dots+i_N = \dim(V)$, the support of the intersection
	\[
	(V+P)  \cdot H_1^{i_1}\cdots H_N^{i_N}
	\] 
	is a finite set of points.
\end{lem}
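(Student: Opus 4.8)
The plan is to reduce the statement to an elementary dimension count for a family of translates of $V$, together with a trivial observation that disposes of the ``degenerate'' tuples $I$. First I would record what the divisors $H_j$ look like geometrically. By \eqref{eq:embedding_divisors}, each $H_j$ equals $3 B_j$, where
\[
	B_j := E_1 \times \dots \times E_{j-1} \times \{(0:1:0)\} \times E_{j+1} \times \dots \times E_N
\]
is the fibre over $(0:1:0)$ of the projection $\pi_j \colon A_N \to E_j$, a codimension-one abelian subvariety of $A_N$. Consequently, for a tuple $I = (i_1,\dots,i_N) \in \mathbb{N}^N$ with $i_1 + \dots + i_N = \dim(V)$, setting $S := \{ j : i_j \geq 1 \}$, the support of the zero-cycle $(V+P) \cdot H_1^{i_1} \cdots H_N^{i_N}$ is contained in $(V+P) \cap B_S$, where $B_S := \bigcap_{j \in S} B_j$ is the abelian subvariety of $A_N$ cut out by the equations $x_j = (0:1:0)$ for $j \in S$; it has dimension $N - \lvert S \rvert$. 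Next I would dispose of the tuples for which $i_j \geq 2$ for some $j$: then $H_j^2 = 0$ in the Chow ring of $A_N$ — indeed $H_j = 3\,\pi_j^*([(0:1:0)])$ and the class of a point squares to zero on the curve $E_j$ — so the cycle $(V+P) \cdot H_1^{i_1}\cdots H_N^{i_N}$ vanishes and has empty (hence finite) support, for every choice of $P$.

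It thus remains to choose $P$ so that $(V+P)\cap B_S$ is finite for each of the finitely many subsets $S \subseteq \{1,\dots,N\}$ with $\lvert S \rvert = \dim(V)$; for these, $\dim(B_S) = N - \dim(V)$. Fixing such an $S$, I would consider the incidence variety $\Gamma_S := \{ (v, P) \in V \times A_N : v + P \in B_S \}$. The morphism $(v,P) \mapsto (v,\, v+P)$ identifies $\Gamma_S$ with $V \times B_S$, so $\Gamma_S$ is irreducible (as $V$ is irreducible, being transverse) of dimension $\dim(V) + (N - \dim(V)) = N = \dim(A_N)$. On the other hand, the map $v \mapsto v+P$ carries the fibre over $P$ of the second projection $\rho_S \colon \Gamma_S \to A_N$ isomorphically onto $(V+P) \cap B_S$. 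Since $\dim(\Gamma_S) = \dim(A_N)$, either $\rho_S$ is dominant, in which case the fibre over a generic point of $A_N$ has dimension $0$, or $\rho_S$ is not dominant, in which case the fibre over a generic point of $A_N$ is empty; in both cases there is a dense open subset $U_S \subseteq A_N$ over which $(V+P)\cap B_S$ is finite. Finally I would take $P$ to be any closed point of the dense open set $\bigcap_S U_S$, which is nonempty because $A_N$ is irreducible and its closed points are dense; by the discussion above, this $P$ has the desired property.

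I do not anticipate a genuine obstacle here: the argument is essentially routine. The one point that needs care is that one should \emph{not} invoke a naive ``generic translates meet in the expected dimension'' principle, for the expected dimension $\dim(V) + \dim(B_S) - \dim(A_N)$ equals $0$ only for the tuples $I$ with all $i_j \leq 1$, and even for those $(V+P)\cap B_S$ may well be empty for a generic $P$ (for instance when $V$ does not project onto $\prod_{j\in S} E_j$) — which is exactly why the dominant/non-dominant dichotomy above is the cleanest way to phrase the argument. Alternatively, one could invoke Kleiman's generic transversality theorem, which applies since $A_N$ acts transitively on itself by translation and we work in characteristic zero.
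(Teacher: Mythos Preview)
Your argument is correct and is genuinely different from the paper's. The paper fixes $S$ with $\lvert S\rvert=\dim V$, considers the projection $\pi_S\colon A_N\to\prod_{j\in S}E_j$ whose kernel is $B_S$, and then uses the finite-stabilizer hypothesis to argue that $\pi_S|_V$ is surjective with generically finite fibres; since $(V+P)\cap B_S$ is a translate of a fibre of $\pi_S|_V$, a generic $P$ works. Your incidence-variety count sidesteps this: the isomorphism $\Gamma_S\cong V\times B_S$ gives $\dim\Gamma_S=N$ directly, and the upper-semicontinuity of fibre dimension does the rest. In particular, your proof never invokes the finite-stabilizer hypothesis (irreducibility of $V$, which transversality gives, is all you use), so you have in fact proved the lemma without that assumption. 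What the paper's route buys, via the surjectivity of $\pi_S|_V$, is the slightly stronger conclusion that for the square-free tuples $I$ the intersection $(V+P)\cap B_S$ is not only finite but \emph{nonempty}; your dominant/non-dominant dichotomy deliberately leaves open the possibility that it is empty. As stated, the lemma only asks for finiteness, so your version suffices, but it is worth being aware that the paper is implicitly aiming at the nonempty case, which is where the stabilizer hypothesis is genuinely doing work. Your closing remark about Kleiman's theorem is also apt and would give another clean route.
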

\begin{proof}
	Suppose that there exists an intersection $V\cdot H_1^{i_1}\cdots H_N^{i_N}$ that is empty or of positive dimension. This implies that $i_j \in \{0,1\}$ for every $j \in \{1,\dots,N\}$, as we mentioned in the proof of \cref{lem:degree_preimage}. 
	Up to a reordering of the coordinates, we can assume that $I$ is given by the index 
	\[
	I=(1,\dots,1,0,\dots,0)
	\] 
	where the first $\dim(V)$ indices are $1$. 
	Then, let $H := H_1\cap \dots \cap H_{\dim V}$, and observe that the intersection $V\cap H$ is also empty or of positive dimension.
	Now, the image of $V$ under the canonical projection 
	\[
	\pi := \pi_I \colon A_N \twoheadrightarrow A_N/H=E_1\times \dots \times E_{\dim V}
	\] 
	is a closed subvariety of $A_N/H$, because $\pi$ is closed. 
	Moreover, $\dim(\pi(V)) = \dim(V)$.
	Indeed, if we had by contradiction that $\dim(\pi(V))<\dim V$, the fibre $\pi^{-1}(Q)$ of a generic point $Q \in \pi(V)$ would be stable under the action of a positive dimensional subgroup $S \subseteq H$. Therefore, we would have that $S \subset \stab(V)$, which contradicts the assumption that $\stab(V)$ is finite. 
	This implies that
	\[\dim(\pi(V)) = \dim(V)=\dim(A_N/H),\] 
	which shows that $ \pi_{|_V}$ is surjective, and that its generic fibre is zero dimensional. 
	Since the possible tuples $I$ are finitely many, there exists a point $P \in A_N(\overline{\QQ})$ such that for all $I$ the fibre $\pi_I^{-1}(0)\cap  (V+P)$  is generic and therefore zero dimensional. \end{proof}

This lemma, which could be potentially useful in future geometric applications, will not be applied in the rest of this paper.

\section{Preimages of transverse curves: an arithmetic approach}
\label{sec:curves}

In  this section we prove   \cref{thm:curves_intro}  using an arithmetic method. Among others, we use the arithmetic Bézout theorem \eqref{ineq:bezout} together with Zhang's inequality \eqref{ineq:zhang}.  Thus, we use arithmetic information in order to understand the transversality of a variety, which is a geometric notion.

\subsection{Some Diophantine inequalities}

The aim of this subsection is to recall the three fundamental inequalities \eqref{ineq:bezout}, \eqref{ineq:zhang} and \eqref{ineq:Zimmer}, and to recall some notation needed in the sequel.

\subsubsection*{Heights}
To start, we let $h_2 \colon \mathbb{P}^m(\overline{\QQ}) \to \mathbb{R}$ denote the \emph{Faltings (or Fubini-Study) height} of points, which is defined as
\[
	h_2(P) := \sum_{v \in M_K^0} \frac{[K_v \colon \mathbb{Q}_v]}{[K \colon \mathbb{Q}]} \log\max_{j=1,\dots,m}\{\lvert P_j \rvert_v\} + \sum_{v \in M_K^\infty} \frac{[K_v \colon \mathbb{Q}_v]}{2 [K \colon \mathbb{Q}]} \log\left( \sum_{j=1}^m \lvert P_j \rvert^2_v \right)
\]
where $K$ is any number field over which $P$ is defined, and $M_K^0$ (respectively $M_K^\infty$) denotes the set of finite (resp. infinite) places of $K$.
This height can be extended to subvarieties $V \subseteq \mathbb{P}^m$ in several ways, and in this paper we follow the convention introduced by Philippon in \cite[Section~2.B]{patriceI}.
Moreover, for every subvariety $V \subseteq A_N$, we let $\hat{h}(V)$ denote the \emph{Néron-Tate height} of $V$ associated to our fixed embedding \eqref{eq:fixed_embedding}, which is defined as in \cite[Page~281]{patriceI}.
We also let $h \colon \mathbb{P}^m(\overline{\QQ}) \to \mathbb{R}$ denote the \emph{logarithmic Weil height} of points, defined as
\[
	h(P) := \sum_{v \in M_K} \frac{[K_v \colon \mathbb{Q}_v]}{[K \colon \mathbb{Q}]} \log\max_{j=1,\dots,m}\{\lvert P_j \rvert_v\},
\]
where $K$ is any number field over which $P$ is defined, and $M_K$ denotes its set of places.
Finally, if $K$ is a number field we let $h_\infty \colon K \to \mathbb{R}$ denote the \emph{Archimedean contribution to the Weil height}, which is defined to be
\[
	h_\infty(x) := \sum_{v \in M_K^\infty} \frac{[K_v \colon \mathbb{Q}_v]}{[K \colon \mathbb{Q}]} \log\max(\lvert x \rvert_v,1).
\]
We note in particular that this function \emph{depends} on the number field $K$.

\subsubsection*{The arithmetic B\'ezout theorem}

Suppose now that $X,Y \subseteq \mathbb{P}^m$ are irreducible subvarieties, and let $Z_1,\dots,Z_g$ be the irreducible components of $X \cap Y$.
Then, the arithmetic Bézout theorem, which was proven by Philippon in \cite[Theorem~3]{patrice}, implies that
\begin{equation} \label{ineq:bezout}
	\sum_{i=1}^g h_2(Z_i)\leq\deg(X)h_2(Y)+\deg(Y)h_2(X)+c_0(\dim X,\dim Y, m)\deg(X)\deg(Y),
\end{equation}
where the function $c_0 \colon \mathbb{N}^3 \to \mathbb{Q}$ admits the explicit expression
\begin{equation}\label{costaBez}
	\begin{aligned}
		c_0(d_1,d_2,m) &= \left(\sum_{i=0}^{d_1}\sum_{j=0}^{d_2} \frac{1}{2(i+j+1)}\right)+\left(m-\frac{d_1+d_2}{2}\right)\log2 \\
		&= \frac{1}{2} \left( (d_1 + d_2 + 1) \mathcal{H}_{d_1 + d_2 + 2} - (d_1 + 1) \mathcal{H}_{d_1 + 1} - (d_2 + 1) \mathcal{H}_{d_2 + 1} \right) \\
		&+ \left(m-\frac{d_1+d_2}{2}\right)\log2,
	\end{aligned}
\end{equation}
which features the harmonic numbers $\mathcal{H}_k := \sum_{j=1}^k \frac{1}{j}$.

\subsubsection*{Zhang's inequality}

We will now recall another seminal inequality in Diophantine geometry, which was proven by Zhang in \cite[Theorem~1.10]{Zha_Small}, and relates the height of a subvariety $V \subseteq A_N$ to the heights of its points.
More precisely, Zhang's theorem implies that
\begin{equation} \label{ineq:zhang}
	\mu_2(V) \leq \frac{h_2(V)}{\deg(V)} \leq (1 + \dim V) \mu_2(V),
\end{equation} 
where $\mu_2(V)$ denotes the \textit{essential minimum} of $V$ with respect to the Faltings height $h_2$, which is defined to be the infimum of all the real numbers $\theta \in \mathbb{R}_{\geq 0}$ such that the subset $\{ P \in V(\overline{\mathbb{Q}}) \, \colon \, h_2(P) \leq \theta \}$ is Zariski dense in $V$.

\subsubsection*{Differences of heights}

To conclude this subsection, we will recall an explicit inequality 
between the Faltings height $h_2(P)$ and the Néron-Tate height $\hat{h}(P)$ of a point $P \in A_N(\overline{\QQ})$, which was proven in \cite[Proposition~3.2]{CVV_Forum}. 
More precisely, if $E$ is an elliptic curve defined over $\overline{\QQ}$ by the Weierstrass equation $E \colon y^2 = x^3 + A x + B$, we set
\begin{equation} \label{eq:constants_height_comparison}
	\begin{aligned}
		c_1(E) &:= \frac{h(A) + h(B)}{2} + \frac{h(\Delta(E)) + h_\infty(j(E))}{4} + \frac{h(j(E))}{8} + 3.724  \\
		c_2(E) &:= \frac{h(A) + h(B)}{2} + \frac{h(\Delta(E)) + h_\infty(j(E))}{4} + 4.015
	\end{aligned}
\end{equation}
where $j(E)$ denotes the $j$-invariant of $E$ and $\Delta(E)$ denotes the discriminant of the Weierstrass equation we fixed above. Moreover, in these formulas, the function $h_\infty$ is taken with respect to the number field $K = \mathbb{Q}(j(E))$.
Finally, we set $c_1(A_N) := c_1(E_1) + \dots + c_1(E_N)$ and we analogously define $c_2(A_N) := c_2(E_1) + \dots + c_2(E_N)$. 
Then, we have that
\begin{equation} \label{ineq:Zimmer}
	-c_1(A_N) \leq h_2(P) - \hat{h}(P) \leq c_2(A_N)
\end{equation}
for every point $P \in A_N(\overline{\QQ})$.
We also recall that, if $E$ is defined over $\QQ$ and $P \in E(\QQ)$, one can take the better constants
\begin{equation*}
	\begin{aligned}
		c_1(E) := \min&\Bigg( \frac{\log(\lvert A \rvert + \lvert B \rvert + 3)}{2} + \frac{\log\lvert \Delta(E) \rvert + \log\max(\lvert j(E) \rvert,1)}{4} + \frac{h(j(E))}{8} + 2.919,\\ &3 h(1 \colon A^{1/2} \colon B^{1/3}) + 4.709 \Bigg) \\
		c_2(E) := \min&\Bigg( \frac{\log(\lvert A \rvert + \lvert B \rvert + 3)}{2} + \frac{\log\lvert \Delta(E) \rvert + \log\max(\lvert j(E) \rvert,1)}{4} + 3.21, \\ &\frac{3}{2} h(1 \colon A^{1/2} \colon B^{1/3}) + 2.427 \Bigg)
	\end{aligned}
\end{equation*}
instead of those defined in \eqref{eq:constants_height_comparison}.

\subsection{An irreducibility criterion for curves}

We are almost ready to prove \cref{thm:curves_intro}. 
Before doing that, we will need to specialize some of the results recalled in the previous paragraphs to the case of curves.
First of all, the following lemma shows how to combine the inequalities \eqref{ineq:zhang} and \eqref{ineq:Zimmer} in order to give an upper bound for the Weil height of some particularly simple varieties.

\begin{lem} \label{lem:zhang_special}
	For every $j \in \{1,\dots,N\}$ and every $Q_j \in E_j(\overline{\mathbb{Q}})$, let 
	\[
	X := E_1 \times \dots \times E_{j-1} \times \{Q_j\} \times E_{j+1} \times \dots \times E_N,
	\] 
	then 
		\begin{equation} \label{ineq:zhang_special}
			h_2(X) \leq N 3^{N-1} (h_2(Q_j) + c_3(A_N)),
		\end{equation}
		where $c_3(A_N) := c_1(A_N) + c_2(A_N)$.
\end{lem}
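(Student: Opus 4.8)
The plan is to derive the estimate from Zhang's inequality \eqref{ineq:zhang} together with the comparison \eqref{ineq:Zimmer} between the Faltings and the N\'eron--Tate heights, the bridge between the two being an upper bound for the essential minimum $\mu_2(X)$.

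First I would apply \eqref{ineq:zhang} to $X$. The variety $X$ is a product of $N-1$ elliptic curves with one coordinate fixed, hence it is irreducible of dimension $N-1$, and the computation performed in the proof of \cref{lem:degree_preimage} shows that its degree with respect to \eqref{eq:fixed_embedding} equals $3^{N-1}$. Therefore \eqref{ineq:zhang} gives
\[
	h_2(X) \leq (1 + \dim X)\,\deg(X)\,\mu_2(X) = N\,3^{N-1}\,\mu_2(X),
\]
and the lemma is reduced to proving the inequality $\mu_2(X) \leq h_2(Q_j) + c_3(A_N)$.

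To establish this bound on the essential minimum, I would exhibit a Zariski-dense set of points of $X$ with small height. Consider the points
\[
	P = (P_1,\dots,P_{j-1},Q_j,P_{j+1},\dots,P_N) \in X(\overline{\QQ})
\]
for which $P_i$ is a torsion point of $E_i$ for every $i \neq j$. Since the torsion points of each $E_i$ are Zariski-dense in $E_i$, such points are Zariski-dense in $X$. For each of them, the N\'eron--Tate height of $A_N$ is the sum of the N\'eron--Tate heights of the factors, and torsion points have vanishing N\'eron--Tate height, so $\hat{h}(P) = \hat{h}(Q_j)$. Applying the upper bound in \eqref{ineq:Zimmer} to $P$ inside $A_N$, and then the lower bound in \eqref{ineq:Zimmer} to $Q_j$ inside the single curve $E_j$, one obtains
\[
	h_2(P) \leq \hat{h}(P) + c_2(A_N) = \hat{h}(Q_j) + c_2(A_N) \leq h_2(Q_j) + c_1(E_j) + c_2(A_N) \leq h_2(Q_j) + c_3(A_N),
\]
where the last step uses $c_1(E_j) \leq c_1(A_N) = c_1(E_1) + \dots + c_1(E_N)$, valid because every constant $c_1(E_i)$ is non-negative. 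Hence $h_2(P) \leq h_2(Q_j) + c_3(A_N)$ on a Zariski-dense subset of $X$, so $\mu_2(X) \leq h_2(Q_j) + c_3(A_N)$; combined with the previous paragraph, this finishes the proof.

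No step is genuinely difficult here --- the argument is mostly bookkeeping. The two points that require a little attention are the explicit computation of $\deg(X)$ with respect to \eqref{eq:fixed_embedding}, which produces the numerical factor $N\,3^{N-1}$, and the fact that \eqref{ineq:Zimmer} must be invoked at two different levels, once for the point $P$ seen inside $A_N$ and once for $Q_j$ seen inside the single curve $E_j$: indeed $h_2(Q_j)$ denotes the Faltings height of $Q_j$ under the embedding $E_j \hookrightarrow \PP^2$, not under the embedding of $A_N$ into $\PP^m$.
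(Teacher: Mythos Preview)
Your proof is correct and follows essentially the same route as the paper: both bound $\mu_2(X)$ by looking at torsion points in the factors $E_i$ with $i\neq j$, apply Zhang's inequality \eqref{ineq:zhang} together with $\deg(X)=3^{N-1}$ and $\dim(X)=N-1$, and pass between $h_2$ and $\hat h$ via \eqref{ineq:Zimmer}. Your treatment is in fact slightly more careful than the paper's, in that you distinguish explicitly between the application of \eqref{ineq:Zimmer} to $P\in A_N$ and to $Q_j\in E_j$, and justify the inequality $c_1(E_j)\leq c_1(A_N)$.
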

\begin{proof}
	First of all, observe that $\deg(X) = 3^{N-1}$ and $\mu_2(X) \leq \hat{h}(Q_j) + c_2(A_N)$, because \eqref{ineq:Zimmer} implies that the set $\{ P \in X(\overline{\mathbb{Q}}) \colon h_2(P) \leq \widehat{h}(Q_j) + c_2(A_N) \}$ contains the set
	\[
	\{ P \in X(\overline{\mathbb{Q}}) \colon \widehat{h}(P) \leq \widehat{h}(Q_j) \},
	\] 
	and the latter contains the set 
	$E_1(\overline{\mathbb{Q}})_\text{tors} \times \dots \times E_{i-1}(\overline{\mathbb{Q}})_\text{tors} \times \{Q_j\} \times E_{i+1}(\overline{\mathbb{Q}})_\text{tors} \times \dots \times E_N(\overline{\mathbb{Q}})_\text{tors}$, 
	which is Zariski dense in $X$.
	Therefore, \eqref{ineq:zhang} implies that
	\[ 
		h_2(X) \leq (1+\dim(X)) \deg(X) \mu_2(X) = N 3^{N-1} \mu_2(X) \leq N 3^{N-1} (\hat{h}(Q_j) + c_2(A_N)),
	\]
	which can be combined with \eqref{ineq:Zimmer} to see that
	\[
		h(X) \leq N 3^{N-1} \left(h(Q_j) + c_3(A_N) \right)
	\]
	as wished.
\end{proof}

Now, let us observe that the formula \eqref{eq:degree_preimage}, which expresses the degree of a preimage of a subvariety $V \subseteq A_N$ in terms of the degree of $V$ itself, can be slightly simplified when $V$ is a curve and $[\alpha_1,\dots,\alpha_N] = [\alpha]_j$ for some $\alpha \in \mathbb{Z}$ and $j \in \{1,\dots,N\}$, as the following lemma shows.

\begin{lem}
	Let $C \subseteq A_N$ be an irreducible curve and fix some $j \in \{1,\dots,N\}$ and $\alpha \in \mathrm{End}(E_j)$. Then, we have that
	\begin{equation} \label{eq:degree_preimage_curves}
		\deg([\alpha]_j^{-1}(C)) = d_j + \deg(\alpha) \sum_{i \neq j} d_i
	\end{equation}
	where $d_i := 3 \deg_0(C \cdot (E_1 \times \dots \times E_{i-1} \times \{(0 \colon 1 \colon 0)\} \times E_{i+1} \times \dots \times E_N))$ for every $i \in \{1,\dots,N\}$.
\end{lem}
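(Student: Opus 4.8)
The plan is to derive \eqref{eq:degree_preimage_curves} directly from \cref{lem:degree_preimage}, specialized to the case of a curve ($\dim(V) = 1$) and a single-coordinate endomorphism $[\alpha]_j$. First I would write $[\alpha]_j = [\alpha_1,\dots,\alpha_N]$ where $\alpha_i = \idi$ for $i \neq j$ and $\alpha_j = \alpha$, so that $\deg(\alpha_i) = 1$ for all $i \neq j$ and $\deg(\alpha_j) = \deg(\alpha)$. Applying \eqref{eq:degree_preimage} with $\dim(V) = 1$, the sum runs over the singletons $J = \{i\}$ for $i \in \{1,\dots,N\}$, so that
\[
\deg([\alpha]_j^{-1}(C)) = \sum_{i=1}^N \left( \prod_{k \neq i} \deg(\alpha_k) \right) \deg_{I_{\{i\}}}(C).
\]
For the term $i = j$ the product $\prod_{k \neq j} \deg(\alpha_k)$ equals $1$, while for $i \neq j$ exactly one factor of the product (namely $k = j$) equals $\deg(\alpha)$ and the rest equal $1$, so $\prod_{k \neq i} \deg(\alpha_k) = \deg(\alpha)$. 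This immediately gives
\[
\deg([\alpha]_j^{-1}(C)) = \deg_{I_{\{j\}}}(C) + \deg(\alpha) \sum_{i \neq j} \deg_{I_{\{i\}}}(C).
\]

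It then remains to identify $\deg_{I_{\{i\}}}(C)$ with the quantity $d_i$ defined in the statement. By the definition \eqref{eq:multiprojective_degree} of the multiprojective degree together with \eqref{eq:embedding_divisors}, the divisor $H_i \in \mathrm{Div}(A_N)$ is $H_i = 3 \cdot (E_1 \times \dots \times E_{i-1} \times \{(0 \colon 1 \colon 0)\} \times E_{i+1} \times \dots \times E_N)$, and since $I_{\{i\}}$ has a single nonzero entry equal to $1$ in position $i$, we get $\deg_{I_{\{i\}}}(C) = \deg_0(C \cdot H_i)$, which by the factor of $3$ is exactly $d_i$. Substituting yields \eqref{eq:degree_preimage_curves}.

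I expect no serious obstacle here: the whole argument is just bookkeeping, carefully tracking which factors in the products $\prod_{k \not\in J} \deg(\alpha_k)$ are trivial. The only point requiring a small amount of care is making sure the $I_J$-index conventions from \cref{lem:degree_preimage} match the divisors $H_i$ from \eqref{eq:embedding_divisors} with the right powers of $3$ — but this is already spelled out in the proof of \cref{lem:degree_preimage}, so I would simply cite it.
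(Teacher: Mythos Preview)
Your proposal is correct and follows essentially the same approach as the paper: specialize \cref{lem:degree_preimage} to $\dim(V)=1$ and $[\alpha]_j$, so the sum runs over singletons $J=\{i\}$, evaluate the products $\prod_{k\neq i}\deg(\alpha_k)$, and then identify $\deg_{I_{\{i\}}}(C)=d_i$ via \eqref{eq:multiprojective_degree} and \eqref{eq:embedding_divisors}. There is nothing to add.
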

\begin{proof}
	It suffices to observe that when $C$ is a curve the only subsets $J \subseteq \{1,\dots,N\}$ which provide a non-trivial contribution to \eqref{eq:degree_preimage} are the singletons $J = \{i\}$. Indeed, setting $\alpha_j = \alpha$ and $\alpha_k = 1$ if $k \neq j$, we see that $\prod_{k \not\in \{i\}} \deg(\alpha_k) = \prod_{k \not\in \{i\}} \deg(\alpha_k) = \deg(\alpha)$ for every $i \in \{1,\dots,N\} \setminus \{j\}$, and that $\prod_{k \not\in \{j\}} \deg(\alpha_k) = 1$. 
	Moreover, we have that $\deg_{I_{\{i\}}}(V) = d_i$ for every $i \in \{1,\dots,N\}$, as follows directly from the definition of multiprojective degree \eqref{eq:multiprojective_degree}, combined with the explicit formula \eqref{eq:embedding_divisors} for the embedding divisors associated to the multiprojective embedding given by \eqref{eq:fixed_embedding}.
\end{proof}

We are finally ready to prove \cref{thm:curves_intro}.

\begin{proof}[Proof (of \cref{thm:curves_intro})]
	Combining \cref{trairr} with \cref{lem:factorization}, we see that it suffices to prove that for every $j \in \{1,\dots,N\}$ the curve $[p_j]_j^{-1}(C)$ is irreducible. 
	
	Therefore, let us fix $j \in \{1,\dots,N\}$, and let us suppose by contradiction that $[p_j]_j^{-1}(C)$ is reducible. Then, the number of components of $[p_j]_j^{-1}(C)$ is either $p_j$ or $p_j^2$. Hence, if $C'$ denotes any irreducible component of $[p_j]_j^{-1}(C)$ which has minimal degree, we see from \eqref{eq:degree_preimage_curves} that 
	\begin{equation} \label{eq:degree_component_curve}
		\deg(C') \leq \frac{\deg([p_j]_j^{-1}(C))}{p_j} \leq d_j' \leq p_j \deg(C),
	\end{equation}
	where $d_j' := d_j + p_j \sum_{k \neq j} d_k$.
	
	Now, the height of $C'$ can be bounded using Zhang's inequality \eqref{ineq:zhang}. 
	More precisely, we have that 
	\begin{equation} \label{ineq:essential_preimage}
		\mu_2(C') \leq \mu_2(C) + c_3(E_j) 
	\end{equation}
	where $c_3(E_j) := c_1(E_j) + c_2(E_j)$ is the constant introduced in \cref{lem:zhang_special}. 
	Indeed, this follows by combining \eqref{ineq:Zimmer} with the fact that
	$\widehat{h}(P) \leq \widehat{h}([p_j]_j(P))$ for every point $P \in C(\overline{\QQ})$.
	Therefore,
	\begin{equation}\label{ineq:hc_Weil}
		h_2(C') \le 2 \deg(C') \mu_2(C') \le 2 p_j \deg(C) (\mu_2(C)+c_3(E_j)) \le 2 p_j (h_2(C) + c_3(E_j) \deg(C)),
	\end{equation}
	where the first and third inequality follow from Zhang's inequality \eqref{ineq:zhang}, while the second one follows from \eqref{eq:degree_component_curve} and \eqref{ineq:essential_preimage}.
	
	Let us now fix a set of points $\mathcal{Q} \subseteq C(\overline{\QQ})$ such that the set of Faltings heights $h_2(\mathcal{Q}) \subseteq \mathbb{R}_{\geq 0}$ is unbounded. Such a set surely exists, because any projection of $C$ onto one of the factors of $A_N$ is surjective, since $C$ is transverse.
	Moreover, thanks to the pigeonhole principle, we can assume, up to shrinking $\mathcal{Q}$, that there exists $i \in \{1,\dots,N\}$ such that for every $k \in \{1,\dots,N\}$ and every point $Q = (Q_1,\dots,Q_N) \in \mathcal{Q}$ we have that $h_2(Q_k) \leq h_2(Q_i)$. 
	
	Suppose now that $i \neq j$. Then, the arithmetic Bézout theorem yields an upper bound for the Faltings height of the $i$-th coordinate $Q_i \in E_i(\overline{\QQ})$ of a point $Q = (Q_1,\dots,Q_N) \in \mathcal{Q}$.
	More precisely, fix such a point $Q \in \mathcal{Q}$. 
	Then, there exists a point $Q_j' \in E_j(\overline{\QQ})$ such that $[p_j](Q_j') = Q_j$ and $Q' \in C'(\overline{\QQ})$, where $Q' := (Q_1,\dots,Q_{j-1},Q_j',Q_{j+1},\dots,Q_N)$.
	Note that \cref{trairr} implies that $C'$ is transverse, because it is an irreducible component of the preimage of a transverse curve. Therefore, we see that $\{Q'\}$ is a component of the intersection $X_j' \cap C'$, where $X_j' := E_1 \times \dots \times E_{j-1} \times \{Q_j'\} \times E_{j+1} \times \dots \times E_N$.
	We apply the arithmetic Bézout theorem \eqref{ineq:bezout} to the intersection $X_j' \cap C'$, obtaining 
	\[
	h_2(Q_j') + \sum_{k \neq j} h_2(Q_k) = h_2(\{Q'\}) \leq h_2(X_j') \deg(C') + h_2(C') \deg(X_j') + c_4 \deg(C') \deg(X_j'),
	\]
	where $c_4 := c_0(1,N-1,3^{N-1})$.
	Combining this with \eqref{ineq:zhang_special}, we see that
	\begin{equation} \label{ineq:applied_bezout_1}
		h_2(Q_j') + \sum_{k \neq j} h_2(Q_k) \leq 3^{N-1} (N h_2(Q_j') \deg(C') + h_2(C') + (c_3(A_N) + c_4) \deg(C')),
	\end{equation}
	because $\deg(X_j') = 3^{N-1}$.
	Moreover, \eqref{ineq:Zimmer} implies that
	\begin{equation} \label{ineq:Weil_height_division}
		h_2(Q_j') \leq \hat{h}(Q_j') + c_2(E_j)  = 
		\frac{\hat{h}(Q_j)}{p_j^2} + c_2(E_j) \leq
		\frac{h_2(Q_j)}{p_j^2} + c_3(E_j).
	\end{equation}
	Finally, we have that
	\begin{equation} \label{ineq:i_neq_j}
		h_2(Q_i) \leq h_2(Q_j') + \sum_{k \neq j} h_2(Q_k),
	\end{equation}
	because we are assuming that $i \neq j$, and we also know that
	\begin{equation} \label{ineq:Q}
		h_2(Q_j) \leq h_2(Q_i)
	\end{equation}
	because $Q \in \mathcal{Q}$.
	Combining the upper bound given by \eqref{ineq:applied_bezout_1} together with the inequalities \eqref{eq:degree_component_curve}, \eqref{ineq:hc_Weil}, \eqref{ineq:Weil_height_division}, \eqref{ineq:i_neq_j} and \eqref{ineq:Q}, we obtain
	\begin{equation} \label{ineq:bound_ineqj}
		3^{1-N} h_2(Q_i) \leq N \frac{d_j'}{p_j^2} h_2(Q_i) + (2 (\mu_2(C) + c_3(A_N)) + c_4) d_j'.
	\end{equation}
	Therefore, the height of $Q_i$ is uniformly bounded above, because we have by assumption that
	\[
	p_j^2 \geq (\deg(C) N 3^{N-1})^2 > d_j' N 3^{N-1}.
	\]
	This contradicts the fact that $h_2(Q) \leq N h_2(Q_i)$ is unbounded, and allows us to conclude that the preimage $[p_j]_j^{-1}(C)$ cannot be reducible whenever $i \neq j$.
	
	Suppose, on the other hand, that $i = j$, and fix a point $Q = (Q_1,\dots,Q_N) \in \mathcal{Q}$. Then, the inequality \eqref{ineq:applied_bezout_1} still holds true, but it is not sufficient any more to get to a contradiction. Instead, we will need to combine \eqref{ineq:applied_bezout_1} with another application of the arithmetic Bézout theorem. More precisely, since $C$ is transverse, the point $\{Q\}$ is a component of each of the intersections $X_l \cap C$, where $l \in \{1,\dots,N\} \setminus \{i\}$ and $X_l$ is defined as $X_l := E_1 \times \dots \times E_{l-1} \times \{Q_l\} \times E_{l+1} \times \dots \times E_N$.
	Therefore, we see that 
	\begin{equation}
		h_2(Q_l) + h_2(Q_i) \leq h_2(\{Q\}) \leq 3^{N-1} (N h_2(Q_l) \deg(C) + h_2(C) + (c_3(A_N) + c_4) \deg(C)),
	\end{equation}
	where the second inequality follows from a combination of \eqref{ineq:bezout} and \eqref{ineq:zhang_special}.
	This implies that there exists a constant $c_5 \in \mathbb{R}_{> 0}$, depending on $C$, such that 
	\begin{equation} \label{ineq:extra_bound_ieqj}
		h_2(Q_j) = h_2(Q_i) \leq (3^{N-1} N \deg(C) - 1) h_2(Q_l) + c_5
	\end{equation}
	for every $l \in \{1,\dots,N\} \setminus \{i\}$.
	In particular, we see that for every $l \in \{1,\dots,N\}$ the set 
	\[
	\{h_2(Q_l) \colon Q = (Q_1,\dots,Q_N) \in \mathcal{Q}\} \subseteq \mathbb{R}_{\geq 0}
	\] 
	is unbounded. 
	Now, choose any $l \in \{1,\dots,N\} \setminus \{i\}$, and we see that an inequality similar to \eqref{ineq:bound_ineqj} holds true.
	More precisely, we have that $h_2(Q_l) \leq h_2(Q_j') + \sum_{k \neq j} h_2(Q_k)$, which allows us to see that
	\begin{equation} \label{ineq:bound_ieqj}
		3^{1-N} h_2(Q_l) \leq N \frac{d_j'}{p_j^2} h_2(Q_j) + (2 (\mu(C) + c_3(A_N)) + c_4) d_j'
	\end{equation}
	by combining once again \eqref{ineq:applied_bezout_1} together with the inequalities \eqref{eq:degree_component_curve}, \eqref{ineq:hc_Weil} and \eqref{ineq:Weil_height_division}.
	Combining this with \eqref{ineq:extra_bound_ieqj} guarantees that
	\[
	3^{1-N} h_2(Q_l) \leq N \frac{d_j'}{p_j^2} (3^{N-1} N \deg(C) - 1) h_2(Q_l) + (2 (\mu(C) + c_3(A_N)) + c_4) d_j' + N \frac{d_j'}{p_j^2} c_5. 
	\]
	As before, this allows us to conclude that $h_2(Q_l)$ is bounded, because 
	\[
	p_j^2 \geq (\deg(C) N 3^{N-1})^2 > d_j' N 3^{N-1} (3^{N-1} N \deg(C) - 1),
	\] 
	by assumption. Since this contradicts what we have shown before, $[p_j]_j^{-1}(C)$ must be irreducible even when $i = j$, as we wanted to prove.
\end{proof}

\begin{remark} \label{rmk:arithmetic_not_generalizing}
	We remark that a generalisation of this proof to a transverse variety $V \subseteq A_N$ is not directly possible. 
	Indeed, to prove something analogous to \cref{thm:curves_intro} it would be sufficient to prove that, given a prime $p$, the preimage $[p,\dots,p,1,\dots,1]^{-1}(V)$ is irreducible when $p$ is sufficiently big, where 
	the diagonal endomorphism $[p,\dots,p,1,\dots,1]$ has $\dim(V)$ components equal to $p$.
	To do so, one would be tempted to consider a point $Q' \in V'$, where $V'$ is some irreducible component of $[p,\dots,p,1,\dots,1]^{-1}(V)$.
	Then, $Q'$ would be an irreducible component of the intersection \[V' \cap (\{Q'\} \times A_{N - \dim(V)}),\] to which one could 
	apply the arithmetic Bézout theorem \eqref{ineq:bezout}.
	However, the gain one obtains by considering the height of $Q'$ is just $\frac{1}{p^2}$, which is not sufficient to overtake the degree of $V'$, which can only be bounded by $p^{2 (\dim(V) - 1)} \deg(V)$.
\end{remark}

The previous remark prompted us to use a more geometric approach to study the transversality of preimages of higher dimensional varieties. 
This is also a hint for the difficulties that one encounters when trying to extend to higher dimensional varieties the methods used in the proof of the Torsion Anomalous Conjecture for curves.

\section{Preimages of transverse subvarieties: a geometric approach}
\label{sec:general}

The aim of this section is to give a geometric proof of the following result, which guarantees that the preimage by suitable group homomorphisms of a transverse subvariety $V \subseteq A_N$ remains transverse. 

\begin{thm} \label{thm:stronger_general_varieties}
	Let $V \subseteq A_N$ be a transverse subvariety, and let 
	\[(\alpha_1,\dots,\alpha_N) \in \prod_{j=1}^N \mathrm{End}(E_j)\] be a tuple of isogenies.
	Moreover, suppose that for every $j \in \{1,\dots,N\}$ there exists a subset $J_j \subseteq \{1,\dots,N\}$ of cardinality $\dim(V)$ such that $j \in J_j$ and
		\[
		\gcd(\deg(\alpha_j),\dim(V)! \deg_{I_{J_j}}(V)) = 1,
		\]
		where $I_{J_j} = (i_{1,j},\dots,i_{N,j})$ is the tuple defined by setting $i_{k,j} = 1$ if $k \in J_j$, and $i_{k,j} = 0$ otherwise, according to the notation introduced in \cref{lem:degree_preimage}.
		Then, the preimage \[[\alpha_1,\dots,\alpha_N]^{-1}(V)\] is transverse.
\end{thm}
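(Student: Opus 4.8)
The plan is to reduce the statement, via the canonical factorization \eqref{eq:canonical_factorization}, to the case of a single factor $[\alpha_j]_j$, and then to determine the number of irreducible components of $[\alpha_j]_j^{-1}(V)$ by an elementary divisibility argument. Since $V$ is transverse, \cref{lem:factorization} applied to $\phi = [\alpha_1,\dots,\alpha_N] = [\alpha_1]_1 \circ \dots \circ [\alpha_N]_N$ shows that $\phi^{-1}(V)$ is transverse if and only if $[\alpha_j]_j^{-1}(V)$ is transverse for every $j \in \{1,\dots,N\}$, and by \cref{trairr} this holds if and only if each $[\alpha_j]_j^{-1}(V)$ is irreducible. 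Hence it suffices to fix $j \in \{1,\dots,N\}$ and prove that $\psi^{-1}(V)$ is irreducible, where $\psi := [\alpha_j]_j$ is the isogeny of $A_N$ whose kernel is $\{0\}\times\dots\times\ker(\alpha_j)\times\dots\times\{0\}$, of order $\deg(\alpha_j)$.

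Write $g$ for the number of irreducible components of $\psi^{-1}(V)$; the goal is to show that $g = 1$. On the one hand, \cref{degcomponenti} tells us that $\ker(\psi)$ acts transitively on these $g$ components, so that $g$ divides $\lvert \ker(\psi) \rvert = \deg(\alpha_j)$. On the other hand, \cref{degcomponenti} also shows that the components of $\psi^{-1}(V)$ are translates of one another; since the multiprojective degree $\deg_{I_{J_j}}$ is invariant under translations of $A_N$ and additive over the reduced, equidimensional cycle $\psi^{-1}(V)$ — here one uses that $\psi$ is \'etale, as we work in characteristic $0$ — all the components have a common multiprojective degree $\delta$. Moreover, the only nontrivial diagonal entry of $\psi = [\alpha_j]_j$ sits in position $j$, and since $j \in J_j$ the formula of Hindry used in the proof of \cref{lem:degree_preimage} gives $\deg_{I_{J_j}}(\psi^{-1}(V)) = \deg_{I_{J_j}}(V)$, the correction factor $\prod_{k \notin J_j} \deg(\alpha_k)$ being an empty-of-nontrivial-terms product equal to $1$. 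Putting these together,
\[
g\,\delta = \deg_{I_{J_j}}(\psi^{-1}(V)) = \deg_{I_{J_j}}(V),
\]
so that $g$ divides $\deg_{I_{J_j}}(V)$ as well.

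Combining the two divisibilities, $g$ divides $\gcd(\deg(\alpha_j),\deg_{I_{J_j}}(V))$, which in turn divides $\gcd(\deg(\alpha_j),\dim(V)!\,\deg_{I_{J_j}}(V)) = 1$ by hypothesis. Hence $g = 1$, i.e.\ $[\alpha_j]_j^{-1}(V)$ is irreducible; as $j$ was arbitrary, the reduction of the first paragraph lets us conclude that $\phi^{-1}(V)$ is transverse. The step requiring the most care is the identity $g\,\delta = \deg_{I_{J_j}}(\psi^{-1}(V))$: one must verify that $\psi^{-1}(V)$ is reduced and pure of dimension $\dim(V)$, so that its multiprojective degree splits additively over its irreducible components, and that these components — being translates of each other by \cref{degcomponenti} — all contribute the same multiprojective degree. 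Everything else is bookkeeping with Hindry's formula and a $\gcd$ computation. (Incidentally, the same argument only uses the weaker hypothesis $\gcd(\deg(\alpha_j),\deg_{I_{J_j}}(V)) = 1$; the factor $\dim(V)!$ is retained here because it makes the deduction of \cref{thm:varieties_intro} in \cref{lem:varieties_strong_to_weak} more transparent.)
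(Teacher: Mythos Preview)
Your argument is correct, and in fact slightly sharper than the paper's own proof, but the route is genuinely different.  The paper does not work directly with $[\alpha_j]_j$; instead, for each $j$ it introduces the auxiliary diagonal endomorphism $f_j$ having $\alpha_j$ in \emph{all} positions of $J_j$ (and $1$ elsewhere), and argues with the \emph{total} projective degree of $f_j^{-1}(V)$ via \cref{lem:degree_preimage}.  In the resulting sum \eqref{eq:degree_preimage}, every term with $J \neq J_j$ picks up a factor $\deg(\alpha_j)$, so $\deg(\alpha_j)$ divides $\deg(f_j^{-1}(V)) - \dim(V)!\,\deg_{I_{J_j}}(V)$; combined with the fact that the number of components of $f_j^{-1}(V)$ divides both $\deg(\alpha_j)^{\dim V}$ and $\deg(f_j^{-1}(V))$, one reaches the same contradiction.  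Your approach bypasses $f_j$ entirely by tracking the single multiprojective degree $\deg_{I_{J_j}}$ of $[\alpha_j]_j^{-1}(V)$, which is preserved under $[\alpha_j]_j$ because $j \in J_j$.  This is cleaner and, as you observe, shows that the hypothesis $\gcd(\deg(\alpha_j),\deg_{I_{J_j}}(V)) = 1$ already suffices --- the factor $\dim(V)!$ in the statement is an artifact of the paper's passage through the Segre degree.  The only point worth stressing is the one you already flag: translation invariance of $\deg_{I_{J_j}}$ holds because translations act trivially on the numerical class of each $H_k$, so the equal-component step goes through for multiprojective degrees exactly as \cref{degcomponenti} proves it for the total degree.
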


This theorem is stronger than our main theorem in the introduction, as specified in the following lemma.
	
	\begin{lem} \label{lem:varieties_strong_to_weak}
	\cref{thm:stronger_general_varieties} implies \cref{thm:varieties_intro}.
	\end{lem}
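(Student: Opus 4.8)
The plan is to deduce \cref{thm:varieties_intro} from \cref{thm:stronger_general_varieties} by checking that, under the hypotheses of the former, the coprimality conditions of the latter hold. Write $d := \dim(V)$ and fix $j \in \{1,\dots,N\}$. The crux will be to exhibit a subset $J_j \subseteq \{1,\dots,N\}$ with $\lvert J_j\rvert = d$, $j \in J_j$ and $\deg_{I_{J_j}}(V) > 0$. Granting this, the coprimality is automatic: by \eqref{eq:Hilbert_Samuel} together with the vanishing $\deg_I(V) = 0$ for $I \neq I_J$ observed in the proof of \cref{lem:degree_preimage}, we have $\deg(V) = d! \sum_J \deg_{I_J}(V)$ with every summand a non-negative integer, so $1 \leq d!\,\deg_{I_{J_j}}(V) \leq \deg(V) \leq d!\deg(V) < p$ for every prime $p \mid \deg(\alpha_j)$, the last inequality being the hypothesis of \cref{thm:varieties_intro} since every such $p$ divides $\deg(\alpha_1)\cdots\deg(\alpha_N)$. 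A prime strictly larger than a positive integer cannot divide it, hence $\gcd(\deg(\alpha_j), d!\,\deg_{I_{J_j}}(V)) = 1$, and \cref{thm:stronger_general_varieties} yields the transversality (hence, by definition, the irreducibility) of $[\alpha_1,\dots,\alpha_N]^{-1}(V)$.

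So the real work lies in the following claim: for a transverse (hence irreducible) variety $V \subseteq A_N$ and any $j$, there is a subset $J \ni j$ with $\lvert J\rvert = d$ such that the coordinate projection $\pi_J \colon A_N \twoheadrightarrow \prod_{k\in J}E_k$ restricts to a dominant morphism on $V$. Indeed, when $\pi_J|_V$ is dominant the multiprojective degree $\deg_{I_J}(V)$ equals, up to the positive factor $3^{d}$, the intersection number of $V$ with the preimage under $\pi_J$ of a generic point of $\prod_{k\in J}E_k$; as $\pi_J|_V$ is dominant, this preimage meets $V$ in a non-empty finite set of points, so $\deg_{I_J}(V) > 0$ (cf.\ \eqref{eq:embedding_divisors} and \cite[\S 3]{patrice_zeros}). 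I would prove the claim by building a chain $\{j\} = J_1 \subsetneq \dots \subsetneq J_d$ with $\lvert J_\ell\rvert = \ell$ and $\dim\pi_{J_\ell}(V) = \ell$, and then setting $J_j := J_d$. The base case $\ell = 1$ is where transversality enters: if $\pi_{\{j\}}|_V$ were not dominant its image would be a single point, so $V$ would lie in a translate of the proper subgroup $\ker\pi_{\{j\}} = \prod_{i\neq j}E_i$ of $A_N$, contradicting transversality; here one uses $d \geq 1$, which holds because a point is never transverse when $N \geq 1$. For the inductive step, suppose $\dim\pi_{J_\ell}(V) = \ell < d$ and, for contradiction, that $\dim\pi_{J_\ell\cup\{k\}}(V) = \ell$ for every $k \notin J_\ell$. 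Choosing $q \in \pi_{J_\ell}(V)$ generic, the fibre $F := V \cap \pi_{J_\ell}^{-1}(q)$ has dimension $d - \ell \geq 1$ and lies in the translate $\pi_{J_\ell}^{-1}(q)$ of $\ker\pi_{J_\ell} = \prod_{k\notin J_\ell}E_k$; moreover, for each $k \notin J_\ell$ the surjection $\pi_{J_\ell\cup\{k\}}(V) \to \pi_{J_\ell}(V)$ is generically finite (both sides having dimension $\ell$), which forces $\pi_{\{k\}}(F)$ to be a finite set. Hence every irreducible component of $F$ has all of its coordinates constant and is therefore a point, contradicting $\dim F \geq 1$; so some $k_0 \notin J_\ell$ satisfies $\dim\pi_{J_\ell\cup\{k_0\}}(V) = \ell+1$, and the chain extends.

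The step I expect to be the main obstacle is this induction, namely excluding the possibility that enlarging the index set never raises the dimension of the image of $V$; the fibre computation above is precisely what rules it out. It is worth emphasising that transversality is used only to get the chain started — once $\dim\pi_{\{j\}}(V) = 1$ is known, the rest is a purely dimension-theoretic bookkeeping argument about projections of $V$ onto coordinate subproducts of $A_N$. The remaining ingredients (the identity $\deg(V) = d!\sum_J\deg_{I_J}(V)$, the positivity of $\deg_{I_{J_d}}(V)$ when $\pi_{J_d}|_V$ is dominant, and the implication transverse $\Rightarrow$ irreducible) are recollections from \cref{sec:degrees}, \cite[\S 3]{patrice_zeros} and the definitions, and should not cause trouble.
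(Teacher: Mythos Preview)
Your proposal is correct and follows the same route as the paper: use \eqref{eq:Hilbert_Samuel} (in the form $\deg(V)=\dim(V)!\sum_J\deg_{I_J}(V)$) to bound $\dim(V)!\,\deg_{I_J}(V)\le\deg(V)$, so that any prime exceeding $\dim(V)!\deg(V)$ is coprime to $\dim(V)!\,\deg_{I_{J_j}}(V)$, and then invoke \cref{thm:stronger_general_varieties}. You are in fact more careful than the paper on one point: the paper asserts $p\nmid\dim(V)!\deg_I(V)$ for \emph{every} $I$, which fails when $\deg_I(V)=0$ --- and this can occur for transverse $V$ (take $V=C\times E_3\subseteq E_1\times E_2\times E_3$ with $C\subseteq E_1\times E_2$ a transverse curve; then $\deg_{I_{\{1,2\}}}(V)=0$). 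Your inductive chain argument, producing for each $j$ a subset $J_j\ni j$ with $\pi_{J_j}|_V$ dominant and hence $\deg_{I_{J_j}}(V)>0$, is precisely what is needed to close this small gap.
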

	\begin{proof} We observe that if we have a tuple $(\alpha_1,\dots,\alpha_j) \in \prod_{j=1}^N \mathrm{End}(E_j)$ such that for every index $j \in \{1,\dots,N\}$ and every prime $p \mid \deg(\alpha_j)$ we have that $p > \dim(V)! \deg(V)$, then in particular $p > \dim(V)! \deg_I(V)$ for every $I \subseteq \{1,\dots,N\}$ such that $\lvert I \rvert = \dim(V)$, thanks to \eqref{eq:Hilbert_Samuel}. Therefore, we see that $p \nmid \dim(V)! \deg_I(V)$ for any $I \subseteq \{1,\dots,N\}$ such that $\lvert I \rvert = \dim(V)$, which clearly implies that the hypotheses of \cref{thm:stronger_general_varieties} are satisfied.
\end{proof}

We are finally ready to prove \cref{thm:stronger_general_varieties}, using most of the results that we proved in \cref{sec:preliminaries}.

\begin{proof}[Proof (of \cref{thm:stronger_general_varieties})]
	Let us consider the endomorphisms $f_j = [\beta_{1,j},\dots,\beta_{N,j}]$, where $\beta_{k,j} := \alpha_j$ if $k \in J_j$, and $\beta_{k,j} = 1$ otherwise.
	Then, thanks to \cref{lem:factorization}, it suffices to show that each of the varieties $f_1^{-1}(V),\dots,f_N^{-1}(V)$ is irreducible. Indeed, if this happens, then for every $j \in \{1,\dots,N\}$ the variety \[[\alpha_j]^{-1}(V) = [\beta_{1,j},\dots,\beta_{j-1,j},1,\beta_{j+1,j},\dots,\beta_{N,j}](f_j^{-1}(V))\] is irreducible, because $\beta_{j,j} = \alpha_j$.
	
	Therefore, let us fix any $j \in \{1,\dots,N\}$, and let us show that $f_j^{-1}(V)$ is irreducible. 
	To do so, let us suppose by contradiction that the variety $f_j^{-1}(V)$ is reducible, and let $d > 1$ be the number of its irreducible components. Then, $d$ divides $\deg(f_j) = \deg(\alpha_j)^{\dim(V)}$.
	Moreover, \cref{degcomponenti} implies that $\deg(f_j^{-1}(V)) = d \deg(W)$, where $W$ is any irreducible component of $f_j^{-1}(V)$.
	This shows that $\deg(\alpha_j)$ and $\deg(f_j^{-1}(V))$ are not coprime.
	On the other hand, \cref{lem:degree_preimage} implies that
	\[
		\deg(f_j^{-1}(V)) - \dim(V)! \deg_{I_{J_j}}(V) = \dim(V)! \sum_{J \neq J_j} \left( \prod_{k \not\in J} \deg(\beta_{k,j}) \right) \deg_{I_J}(V),
	\]
	where the sum on the right hand side runs over all the subsets $J \subseteq \{1,\dots,N\}$ such that $\lvert J \rvert = \dim(V)$ and $J \neq J_j$. 
	These two conditions show that for every such $J$ there exists $k \not\in J$ such that $\beta_{k,j} = \alpha_j$, as one sees from the definition of $f_j$. 
	Therefore, we see that
	\begin{equation} \label{eq:divisibility}
		\deg(\alpha_j) \mid \deg(f_j^{-1}(V)) - \dim(V)! \deg_{I_{J_j}}(V),
	\end{equation}
	which implies that $\deg(\alpha_j)$ and $\dim(V)! \deg_{I_{J_j}}(V)$ are not coprime.
	However, this contradicts our assumptions, and allows us to conclude that $f_j^{-1}(V)$ must be irreducible, as we wanted to show.
\end{proof}

In particular, we see that \cref{thm:stronger_general_varieties} implies the following result for endomorphisms which are multiples of the identity.

\begin{cor} \label{cor:multiple_identity}
	Let $V \subseteq A_N$ be a transverse variety, and let $n \in \mathbb{Z}$ be an integer such that for every $j \in \{1,\dots,N\}$ there exists a tuple $I = (i_1,\dots,i_N) \in \mathbb{N}^N$ with $i_j = 1$ such that
	\[\gcd(n,\dim(V)! \deg_I(V)) = 1.\]
	Then, the preimage $[n,\dots,n]^{-1}(V)$ is transverse.
	
	In particular, if $p \in \mathbb{Z}$ is a prime such that $p \nmid \dim(V)!$ and 
	\begin{equation} \label{eq:assumption_corollary_diagonal}
		p \nmid \gcd\{ \deg_I(V) \colon  I = (i_1,\dots,i_N) \in \{0,1\}^N, \ i_1 + \dots + i_N = \dim(V), \ i_j = 1\}
	\end{equation} 
	for every $j \in \{1,\dots,N\}$, then $[p,\dots,p]^{-1}(V)$ is transverse. 
\end{cor}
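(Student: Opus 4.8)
The plan is to deduce this statement directly from \cref{thm:stronger_general_varieties} by specializing to the diagonal endomorphism all of whose entries equal $[n]$ (respectively $[p]$). First I would note that we may assume $n \neq 0$: indeed, if $n = 0$ then $\gcd(n,\dim(V)!\deg_I(V)) = \dim(V)!\deg_I(V)$, which can equal $1$ only if $\dim(V) = 0$, and this contradicts the transversality of $V$, since a zero-dimensional irreducible variety is a single point, hence contained in the translate of the trivial subgroup (which is proper because $N \geq 1$). In particular the same remark shows that $\dim(V) \geq 1$.

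Next, set $\alpha_1 = \dots = \alpha_N = [n]$, so that each $\alpha_j$ is an isogeny with $\deg(\alpha_j) = n^2$. I would use the elementary fact that, for $m \in \mathbb{N}$, one has $\gcd(n^2,m) = 1$ if and only if $\gcd(n,m) = 1$, since $n$ and $n^2$ share the same prime divisors. Now fix $j \in \{1,\dots,N\}$ and let $I = (i_1,\dots,i_N)$ be a tuple furnished by the hypothesis, so that $i_j = 1$ and $\gcd(n,\dim(V)!\deg_I(V)) = 1$; in particular $\deg_I(V) \neq 0$. As recalled in the proof of \cref{lem:degree_preimage}, the multiprojective degree $\deg_I(V)$, which is only defined when $i_1 + \dots + i_N = \dim(V)$, vanishes as soon as some $i_k \geq 2$ for the embedding \eqref{eq:fixed_embedding}; hence $I \in \{0,1\}^N$, so $I = I_{J_j}$ where $J_j := \{k : i_k = 1\}$ is a subset of $\{1,\dots,N\}$ of cardinality $\dim(V)$ containing $j$. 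Consequently $\gcd(\deg(\alpha_j),\dim(V)!\deg_{I_{J_j}}(V)) = \gcd(n^2,\dim(V)!\deg_{I_{J_j}}(V)) = 1$. Since $j$ was arbitrary, the hypotheses of \cref{thm:stronger_general_varieties} are met, and it follows that $[n,\dots,n]^{-1}(V)$ is transverse.

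For the final assertion I would simply verify, with $n := p$, that the hypothesis just used is satisfied. For every $j$ the set $\{I \in \{0,1\}^N : i_1 + \dots + i_N = \dim(V),\ i_j = 1\}$ is nonempty, because $\dim(V) \geq 1$ allows one to complete $\{j\}$ to a subset of size $\dim(V)$. The assumption \eqref{eq:assumption_corollary_diagonal} then provides, for each $j$, some such tuple $I$ with $p \nmid \deg_I(V)$; combined with $p \nmid \dim(V)!$ and the primality of $p$, this gives $p \nmid \dim(V)!\deg_I(V)$, that is, $\gcd(p,\dim(V)!\deg_I(V)) = 1$. This is precisely the hypothesis of the first part of the corollary with $n = p$, so $[p,\dots,p]^{-1}(V)$ is transverse.

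I do not expect a real obstacle here: the corollary is essentially a bookkeeping consequence of \cref{thm:stronger_general_varieties}. The points that require a modicum of care are the equivalence between coprimality with $n$ and with $\deg([n]) = n^2$; the identification of an admissible index tuple $I$ with a cardinality-$\dim(V)$ subset $J_j \ni j$, which rests on the vanishing of $\deg_I(V)$ for tuples outside $\{0,1\}^N$ under the fixed Segre embedding; and the observation that transversality forces $\dim(V) \geq 1$, which simultaneously rules out the degenerate value $n = 0$ and guarantees that the relevant index sets are nonempty.
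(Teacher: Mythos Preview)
Your argument is correct and takes the same approach as the paper: both parts are obtained directly from \cref{thm:stronger_general_varieties} with $\alpha_1=\cdots=\alpha_N=n$ (respectively $p$). Your treatment is in fact more explicit than the paper's, which dispatches the first part as ``there is nothing to prove'' and handles the second by contradiction; your direct verification, including the passage from $\gcd(n,\cdot)=1$ to $\gcd(n^2,\cdot)=1$ and the identification of the hypothesis tuple $I$ with a subset $J_j\ni j$ via the vanishing of $\deg_I$ outside $\{0,1\}^N$, fills in details the paper leaves implicit.
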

\begin{proof}
	The first part of this corollary is precisely obtained by setting $\alpha_1 = \dots = \alpha_N = n$ in \cref{thm:stronger_general_varieties}, so there is nothing to prove.
	For the second part, suppose by contradiction that setting $\alpha_1 = \dots = \alpha_N = p$ yields a tuple of isogenies which does not satisfy the assumptions of \cref{thm:stronger_general_varieties}.
	This implies necessarily that $p \mid \deg_I(V)$ for every tuple $I = (i_1,\dots,i_N) \in \{0,1\}^N$ such that $i_1 + \dots + i_N = \dim(V)$, because for each of these tuples there exists $j \in \{1,\dots,N\}$ such that $i_j = 1$, since $\dim(V) \geq 1$.
	However, this divisibility property contradicts our assumption \eqref{eq:assumption_corollary_diagonal}, and this allows us to conclude that the tuple of isogenies $(p,\dots,p)$ satisfies the assumptions of \cref{thm:stronger_general_varieties}, which implies that $[p,\dots,p]^{-1}(V)$ is transverse.
\end{proof}

Finally, we provide a corollary to \cref{thm:stronger_general_varieties} for curves.

\begin{cor} \label{cor:curves_from_varieties}
	Let $C \subseteq A_N$ be a transverse curve, and let $(\alpha_1,\dots,\alpha_N) \in \prod_{j=1}^N \mathrm{End}(E_j)$ be a tuple of isogenies. Moreover, suppose that for every $j \in \{1,\dots,N\}$ we have that
	\[
		\gcd(\deg(\alpha_j),\deg_j(C)) = 1
	\]
	where $\deg_1(C) := \deg_{(1,0,\dots,0)}(C),\dots,\deg_N(C) := \deg_{(0,\dots,0,1)}(C)$. Then, $[\alpha_1,\dots,\alpha_N]^{-1}(C)$ is transverse.
\end{cor}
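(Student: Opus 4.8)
The plan is to deduce this statement directly from \cref{thm:stronger_general_varieties}, exactly as \cref{cor:multiple_identity} was deduced from it. First I would note that since $C$ is a curve, $\dim(V) = 1$, so $\dim(V)! = 1$, and the only subsets $J \subseteq \{1,\dots,N\}$ with $\lvert J \rvert = \dim(C) = 1$ are the singletons $J = \{j\}$ for $j \in \{1,\dots,N\}$. For such a singleton, the associated tuple $I_{\{j\}}$ is precisely $(0,\dots,0,1,0,\dots,0)$ with the $1$ in position $j$, so $\deg_{I_{\{j\}}}(C) = \deg_j(C)$ in the notation of the corollary.

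Next I would verify the hypothesis of \cref{thm:stronger_general_varieties}: for every index $j \in \{1,\dots,N\}$ we must exhibit a subset $J_j$ of cardinality $\dim(C) = 1$ with $j \in J_j$ and $\gcd(\deg(\alpha_j), \dim(C)! \deg_{I_{J_j}}(C)) = 1$. The only candidate with $j \in J_j$ and $\lvert J_j \rvert = 1$ is $J_j = \{j\}$ itself. For this choice, the gcd condition reads $\gcd(\deg(\alpha_j), \deg_j(C)) = 1$, which is exactly the hypothesis assumed in the corollary. Hence the hypotheses of \cref{thm:stronger_general_varieties} are satisfied, and we conclude that $[\alpha_1,\dots,\alpha_N]^{-1}(C)$ is transverse.

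There is essentially no obstacle here: the corollary is just the $\dim(V) = 1$ specialization of \cref{thm:stronger_general_varieties}, with the sole content being the bookkeeping that for a curve the only relevant multiprojective degrees are the $\deg_j(C)$ and that the forced choice $J_j = \{j\}$ makes the coprimality condition collapse to the stated one. One could optionally remark, as in the second part of \cref{cor:multiple_identity}, that when all $\alpha_j = \alpha$ this says that $[\alpha]$-preimages of $C$ stay transverse as soon as $\deg(\alpha)$ is coprime to $\gcd_j \deg_j(C)$, but this is not needed for the proof itself.
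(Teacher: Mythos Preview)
Your argument is correct and is exactly the intended one: the paper states this corollary without proof, since it is the immediate $\dim(V)=1$ specialization of \cref{thm:stronger_general_varieties}, with $J_j=\{j\}$ forced and $\dim(C)!=1$. One small slip in your optional closing remark (which, as you say, is not part of the proof): when all $\alpha_j=\alpha$, the hypothesis requires $\deg(\alpha)$ to be coprime to \emph{each} $\deg_j(C)$, hence to their lcm, not merely to $\gcd_j \deg_j(C)$.
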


\section{Examples and applications}
\label{sec:examples}

The aim of this section is to provide some applications of our main results.

\subsection{Transversality of specific subvarieties}

Let us see how our main results allow one to prove that some specific subvarieties of a product of elliptic curves are irreducible.

First of all, we give an example for $N = 2$. More precisely, for every $n \in \mathbb{Z}_{\geq 1}$ we consider the curve $C_n \subseteq E_1 \times E_2$ which is the projective closure of the affine curve
\begin{equation} \label{eq:transverse_family}
	C_n^\circ \colon \begin{cases}
		y_2 &= x_1^n \\
		y_1^2 &= x_1^3 + A_1 x_1 + B_1 \\
		y_2^2 &= x_2^3 + A_2 x_2 + B_2
	\end{cases}
\end{equation}
inside $\mathbb{A}^2 \times \mathbb{A}^2$.
Then, \cite[Theorem~6.2]{CVV_Forum} shows that $C_n$ is transverse, and computes that 
\[
\begin{aligned}
	\deg_{(1,0)}(C_n) &= 9 \\
	\deg_{(0,1)}(C_n) &= 6 n,
\end{aligned}
\] which implies that $\deg(C_n) = 6 n + 9$. 

Let us see how one can make explicit the equations of the preimages of $C_n$.
To this end, we recall that for every elliptic curve $E$ embedded in $\mathbb{P}^2$ via a short Weierstrass equation $y^2 z = x^3 + A x z^2 + B z^3$, and every $\alpha \in \mathbb{Z}$, there exist three polynomials $r_{\alpha}, s_\alpha, t_\alpha \in \mathbb{Q}[x,z,A,B]$, which are homogeneous in $x$ and $z$, such that
\begin{equation} \label{eq:scalar_multiplication}
	[\alpha]_{E}(P) = \left( r_\alpha(x,z,A,B) t_\alpha(x,z,A,B) \colon s_\alpha(x,z,A,B) y \colon t_\alpha(x,z,A,B)^3 z \right)
\end{equation}
for every point $P = (x \colon y \colon z) \in E \setminus \ker([\alpha]_{E})$, as shown in \cite[Chapter~13, Section~9]{Husemoller_2004}, and \cite[Lecture~4, Section~4.4]{Sutherland_2022}. 
Moreover, the polynomial $t_\alpha$ is the homogenized version of the $\alpha$-th division polynomial of $E$, and we always have that \[\gcd(r_\alpha,s_\alpha) = \gcd(s_\alpha,t_\alpha) = 1,\] whereas $\gcd(r_\alpha,t_\alpha) = 1$ if and only if $2 \nmid \alpha$.
On the other hand, when $2 \mid \alpha$ we have that 
\[
	\begin{aligned}
		r_\alpha &= (x^3 + A x z^2 + B z^3) \tilde{r}_\alpha \\
		t_\alpha &= (x^3 + A x z^2 + B z^3) \tilde{t}_\alpha
	\end{aligned}
\]
where $\tilde{r}_\alpha, \tilde{t}_\alpha \in \mathbb{Q}[x,z,A,B]$ are homogeneous in $x$ and $z$, and we have that $\gcd(\tilde{r}_\alpha,\tilde{t}_\alpha) = 1$.
Using these facts, we see that \eqref{eq:scalar_multiplication} still holds for every point $(x \colon y \colon z) \in \ker([\alpha]_{E})$, unless $2 \mid \alpha$ and $P \in \ker([2]_{E})$, in which case the morphism appearing in \eqref{eq:scalar_multiplication} is not well defined.
To avoid this issue, one can multiply the three polynomials appearing on the right hand side of \eqref{eq:scalar_multiplication} by $y z$, and then divide everything by $x^3 + A x z^2 + B z^3$, to obtain the following formula
\begin{equation} \label{eq:scalar_multiplication_even}
	[\alpha]_{E}(P) = (r_\alpha(x,z,A,B) \tilde{t}_\alpha(x,z,A,B) y z \colon s_\alpha(x,z,A,B) \colon \tilde{t}_\alpha(x,y,A,B) t_\alpha(x,y,A,B)^2 y z^2),
\end{equation}
which is valid for every point $P := (x \colon y \colon z) \in E$.

Substituting a dehomogenized version of the equations \eqref{eq:scalar_multiplication} and \eqref{eq:scalar_multiplication_even} inside \eqref{eq:transverse_family}, we see that for every pair of \emph{odd} integers $\alpha_1,\alpha_2 \in \mathbb{Z}$, the preimage $[\alpha_1,\alpha_2]^{-1}(C_n)$ is given by the projective closure of the affine curve $([\alpha_1,\alpha_2]^{-1}(C_n))^\circ = V \cap (E_1^\circ \times E_2^\circ) \subseteq \mathbb{A}^2 \times \mathbb{A}^2$, where $E_i^\circ \colon y_i^2 = x_i^3 + A_i x_i + B_i$ for every $i \in \{1,2\}$, while
\[
	V \colon y_2 \, s_{\alpha_2}(x_2,1,A_2,B_2) \, t_{\alpha_1}(x_1,1,A_1,B_1)^{2 n} = r_{\alpha_1}(x_1,1,A_1,B_1)^n \, t_{\alpha_2}(x_2,1,A_2,B_2)^3.
\]
On the other, hand, if for example $\alpha_1$ is even and $\alpha_2$ is odd, an affine model for the curve $[\alpha_1,\alpha_2]^{-1}(C_n)$ is given by $([\alpha_1,\alpha_2]^{-1}(C_n))^\circ = W \cap (E_1^\circ \times E_2^\circ) \subseteq \mathbb{A}^2 \times \mathbb{A}^2$, where
\[
	\begin{aligned}
		W &\colon y_2 s_{\alpha_2}(x_2,1,A_2,B_2) t_{\alpha_1}(x_1,1,A_1,B_1)^n \tilde{t}_{\alpha_1}(x_1,1,A_1,B_1)^{n} \\ &= \tilde{r}_{\alpha_1}(x_1,1,A_1,B_1)^n t_{\alpha_2}(x_2,1,A_2,B_2)^3.
	\end{aligned}
\]

In particular, the equations describing these preimages can be quite complicated, because the size of the coefficients of the polynomials $r_\alpha, s_\alpha$ and $t_\alpha$ grows with respect to $\lvert \alpha \rvert$.
For example, we have that
\[
	\begin{aligned}
		r_2(x,1,A,B) &= x^7 - A x^5 - 7 B x^4 - A 2 x^3 - 10 A B x^2 + (A^3 - 8 B^2) x + A^2 B \\
		s_2(x,1,A,B) &= x^9 + 6 A x^7 + 21 B x^6 + 21 A B x^4 + (12 B^2 -6 A^3) x^3 - 9 A^2 B x^2 \\ &- (A^4 + 12 A B^2) x -A^3 B - 8 B^3 \\
		t_2(x,1,A,B) &= 2 (x^3 + A x + B)
	\end{aligned}
\]
whereas
\[
	\begin{aligned}
		r_3(x,1,A,B) &= x^9 - 12 A x^7 - 96 B x^6 + 30 A^2 x^5 - 24 A B x^4 + (36 A^3 + 48 B^2) x^3 + 48 A^2 B x^2\\ 
		&+ (9 A^4 + 96 A B^2) x + 8 A^3 B + 64 B^3 \\
		s_3(x,1,A,B) &= x^{12} + 22 A x^{10} + 220 B x^9 - 165 A^2 x^8 - 528  A B x^7 - (92 A^3 + 1776 B^2) x^6 \\
		&+ 264 A^2 B x^5 - (185 A^4 + 960 A B^2) x^4 -(80 A^3 B - 320 B^3) x^3 \\ &- (90 A^5 + 624 A^2 B^2) x^2
		- (132 A^4 B + 896 A B^3) x - 3 A^6 - 96 A^3 B^2 - 512 B^4 \\
		t_3(x,1,A,B) &= 3 x^4 + 6 A x^2 + 12 B x - A^2.
	\end{aligned}
\]
Therefore, we see that checking whether the curves $[\alpha,1]^{-1}(C_n)$ and $[1,\alpha]^{-1}(C_n)$ are irreducible can be difficult, even with the help of a computer. 

On the other hand, \cref{thm:curves_intro} shows that if $p \in \mathbb{Z}$ is a prime such that $\lvert p \rvert \geq 6 (6 n + 9)$ then the two curves $[p,1]^{-1}(C_n)$ and $[1,p]^{-1}(C_n)$ are transverse (and in particular irreducible), because we have that $\deg(C_n) = 6 n + 9$, as we recalled above.
These results can in fact be improved using \cref{cor:curves_from_varieties}. 
More precisely, since $\deg_{(1,0)}(C_n) = 9$ and $\deg_{(0,1)}(C_n) = 6 n$ we see that $[\alpha,1]^{-1}(C_n)$ is transverse whenever $3 \nmid \alpha$, whereas $[1,\alpha]^{-1}(C_n)$ is transverse if $\gcd(\alpha,6n) = 1$. 
The following example gives an explicit illustration of this transversality criterion.

\begin{example} \label{ex:irreducible_preimage}
	Let $A_1 = A_2 = 0$ and $B_1 = B_2 = 1$, so that $E_1 = E_2$ is an elliptic curve of conductor $36$. Then, the previous considerations imply that the projective closure of the affine curve
	\begin{equation} \label{eq:preimage_C3}
		([2,1]^{-1}(C_3))^\circ \colon
		\begin{cases}
			y_2 (x_1^3+1)^3 = (x_1^4 - 8 x_1)^3 \\
			y_1^2 = x_1^3 + 1 \\
			y_2^2 = x_2^3 + 1
		\end{cases}
	\end{equation}
	is irreducible inside $(\mathbb{P}^2)^2$.
	Analogously, we see that the curve $[1,5]^{-1}(C_3)$, which is the projective closure of the affine curve $V^\circ \cap (E_1^\circ \times E_2^\circ)$, where $E_i^\circ \colon y_i^2 = x_i^3 + 1$ for $i \in \{1,2\}$, and 
	\[
		 \begin{aligned}
		 	V^\circ &\colon y_2 (x_2^{36}                                                                   
		 	+ 4692\*x_2^{33}
		 	- 884544\*x_2^{30}
		 	+ 1880320\*x_2^{27}
		 	- 94222080\*x_2^{24} \\
		 	&- 1437769728\*x_2^{21}
		 	- 3534606336\*x_2^{18}
		 	- 8883929088\*x_2^{15}
		 	- 6868500480\*x_2^{12} \\
		 	&- 1853358080\*x_2^9
		 	- 497025024\*x_2^6
		 	- 742391808\*x_2^3
		 	+ 16777216)^3 = \\
		 	&= x_1^3 (x_2^{12} + 76 x_2^9 - 48 x_2^6 - 320 x_2^3 - 256/5)^3
		 \end{aligned}
	\]
	is irreducible in $(\mathbb{P}^2)^2$.
\end{example}

\begin{algorithm}[t]
	\textbf{Input}: A subvariety $V \subseteq \mathbb{A}^4$, two elliptic curves $E_1,E_2$ and a diagonal endomorphism $f \colon E_1 \times E_2 \to E_1 \times E_2$.
	\begin{lstlisting}
		A.<x1,y1,x2,y2> = AffineSpace(QQ,4) 
		R.<x,y,z> = PolynomialRing(QQ)
		A.inject_variables()
		def Preimage(V,E1,E2,f):
		E1 = E1.short_weierstrass_model()
		E2 = E2.short_weierstrass_model()
		e1 = (R(E1.defining_polynomial())).subs(x=x1,y=y1,z=1)
		e2 = (R(E2.defining_polynomial())).subs(x=x2,y=y2,z=1)
		f1 = [h.subs(x=x1,y=y1) for h in E1.scalar_multiplication(f[0]).rational_maps()]
		f2 = [h.subs(x=x2,y=y2) for h in E2.scalar_multiplication(f[1]).rational_maps()]
		L = V.defining_polynomials()
		M = [e1,e2]
		for l in L:
		M.append((l.subs({x1:f1[0],y1:f1[1],x2:f2[0],y2:f2[1]}).numerator()))
		return A.subscheme(M)
		S = Preimage(V,E1,E2,f).irreducible_components()
	\end{lstlisting}
	\textbf{Output:} The set $S$ of affine models of the irreducible components of $f^{-1}(C)$, where \\ $C := \overline{V} \cap (E_1 \times E_2)$ is the intersection of the closure of $V$ inside $(\mathbb{P}^2)^2$, denoted $\overline{V}$, with $E_1 \times E_2$.
	\vspace{0.2cm}
	\caption{\textsc{SageMath} code to compute the irreducible components of the preimage of a curve $C \subseteq A_2$.}
	\label{alg:irreducibility}
\end{algorithm}

\subsubsection*{Computational aspects}

Since these curves are defined by explicit equations, one could also try to check their irreducibility using a software such as \textsc{SageMath}. This can be easily done using the code portrayed in \cref{alg:irreducibility}. However, even an irreducibility check in this simple case turns out to be very expensive from a computational point of view.

\begin{remark}
	Let us note that the \textsc{SageMath} command \texttt{rational\_maps()} applied to the scalar multiplications $f_1$ and $f_2$ in \cref{alg:irreducibility} allows one to obtain the canonical, simplified form of the isogenies $f_1$ and $f_2$, which corresponds to our choice of the polynomials $r_\alpha, s_\alpha$ and $t_\alpha$. 
	On the other hand, if one uses the other natural command \texttt{as\_morphism().defining\_polynomials()}, the projective equations that one obtains are not reduced to the lowest terms.
	In particular, writing \cref{alg:irreducibility} with such a command would yield preimages which contain always components of the form $\ker(f_1) \times E_2$ and $E_1 \times \ker(f_2)$, which clearly cannot happen.
\end{remark}

\subsubsection*{Higher dimensions}
To conclude, let us note that one can consider more generally curves $C \subseteq A_N$ when $N \geq 3$. 
For instance, \cite[Theorem~2]{Viada_2021} shows that for every family of non-constant polynomials $p_1,\dots,p_{N-1} \in \overline{\mathbb{Q}}[t]$, the curve defined in $A_N$ by the affine equations
\[
	\begin{cases}
		y_1 = p_1(x_2) \\
		\dots \\
		y_{N-1} = p_{N-1}(x_N)
	\end{cases}
\]
is transverse. Therefore, carrying out a computation analogous to the one that we performed above, we can apply our \cref{thm:curves_intro,thm:varieties_intro} to this family of curves, in order to show that some of their preimages under diagonal endomorphisms are irreducible.

\subsection{Bounding the height of intersections}
Our main results can be used to bound explicitly the Faltings height of an irreducible component of intersections of the form $\phi^{-1}(C) \cap B$, where $B \subseteq A_N$ is a subgroup, $C \subseteq A_N$ is a transverse curve and $\phi \colon A_N \to A_N$ is a diagonal endomorphism such that $\phi(B) = B$.

More precisely, assume to know the degree of $\phi^{-1}(C)$. 
For example, one might know its equations.  
Let $C_0$ be an irreducible component of $\phi^{-1}(C)$. 
Then, the arithmetic Bézout theorem \eqref{ineq:bezout} implies that
\begin{equation} \label{ineq:bezout_trivial}
\begin{aligned}
	h_2(C_0 \cap B) &\leq \left( \begin{aligned}
		\deg(C_0) h_2(B) &+ \deg(B) h_2(C_0) \\ &+ c_0(1,d,3^N-1) \deg(C_0) \deg(B)
	\end{aligned} \right)
	\\ &\le \left( \begin{aligned}
		\deg(\phi^{-1}(C)) h_2(B) &+ \deg(B) h_2(\phi^{-1}(C)) \\ &+ c_0(1,d,3^N-1) \deg(\phi^{-1}(C)) \deg(B)
	\end{aligned} \right),
\end{aligned}
\end{equation}
where $d = \dim(B)$. 
Unfortunately without further knowledge on the number of components of $\phi^{-1}(C)$, for the degree of $C_0$, one has to use the trivial bound $\deg(C_0) \le \deg(\phi^{-1}(C))$. 
On the other hand, only the irreducibility of $\phi^{-1}(C)$ ensures that $C_0$ is stabilized by $\ker(\phi)$. 
So without our theorem it is not easy to improve  \eqref{ineq:bezout_trivial}.

However, \cref{thm:stronger_general_varieties}  shows that in a large number of cases, $\phi^{-1}(C)$ is irreducible. 
Moreover it is stable under the action of $\ker(\phi)$, because we assumed that $\phi(B) = B$. 
Thus $\phi^{-1}(C) \cap B$ consists of $\deg(\phi)$ irreducible components of the same height. 
Therefore, by the arithmetic Bézout theorem we obtain 
\begin{equation} \label{ineq:Bezout_improved}
		\deg(\phi) h_2(P) \leq \left( \begin{aligned} \deg(\phi^{-1}(C)) h_2(B) &+ \deg(B) h_2(\phi^{-1}(C)) \\ &+ c_0(1,d,3^N-1) \deg(\phi^{-1}(C)) \deg(B) \end{aligned} \right),
\end{equation}
for every $P \in (\phi^{-1}(C) \cap B)(\overline{\QQ})$ which can considerably improve \eqref{ineq:bezout_trivial} when $\phi$ has a big degree.
Note that here it is central the fact that we explicitly know the degree and irreducibility of $\phi^{-1}(C)$, and not only (or, in fact, not necessarily) the degree of $C$.

\subsection{Lower bounds for the essential minima of images}

Our main theorem can be applied also to get new lower bounds for the essential minima, with respect to the Néron-Tate height, of the images of certain curves $C \subseteq A_N$ by some endomorphisms $\Phi \colon A_N \to A_N$. In particular, we obtain these bounds by applying a result of Galateau \cite{Galateau_2010} in an indirect way, which requires to combine it with our \cref{thm:curves_intro,thm:stronger_general_varieties}. As we will show, this allows us to improve the lower bounds obtained from a direct application of Galateau's result.  

More precisely, fix two integers $r, N \geq 2$, a product of elliptic curves $A_N := E_1 \times \dots \times E_N$, and an endomorphism $\Phi \colon A_N \to A_N$ which admits the matrix representation
\begin{equation} \label{eq:Phi}
	\Phi = \left( \begin{array}{c:c}
		\alpha \cdot \mathrm{Id}_r & L \\\hdashline[2pt/2pt]
		\mathbf{0} & \mathrm{Id}_{N-r}
	\end{array} \right),
\end{equation}
where $\alpha \in \mathbb{Z}$ and $L = (\ell_{i,j}) \colon E_{r+1} \times \dots \times E_N \to E_1 \times \dots \times E_r$ is a morphism of abelian varieties with components $\ell_{i,j} \colon E_j \to E_i$, for $j \in \{r+1,\dots,N\}$ and $i \in \{1,\dots,r\}$.
Moreover, let us suppose that $\alpha^2 \geq d_L := \max_{i,j}(\deg(\ell_{i,j}))$. 
Then, if we define the automorphism $\Psi \colon A_N \to A_N$ as
\[
	\Psi := \left( \begin{array}{c:c}
		\mathrm{Id}_r & L \\\hdashline[2pt/2pt]
		\mathbf{0} & \mathrm{Id}_{N-r}
	\end{array} \right),
\]
we have the obvious relation $\Phi \circ f = [\alpha,\dots,\alpha] \circ \Psi$, where $f := [1,\dots,1,\alpha,\dots,\alpha]$ is the diagonal endomorphism with the first $r$ entries equal to one and the last $N-r$ entries equal to $\alpha$. In particular, we have that
\[
	\Phi(C) = [\alpha,\dots,\alpha](\Psi(f^{-1}(C))).
\] 
Since the Néron-Tate height is a quadratic form, this implies that
\begin{equation} 
	\label{eq:essential_minimum_image}
	\hat{\mu}(\Phi(C)) = \alpha^2 \hat{\mu}(\Psi(f^{-1}(C))),
\end{equation} 
where $\hat{\mu}$ denotes the essential minimum with respect to the Néron-Tate height $\hat{h} \colon A_N(\overline{\QQ}) \to \mathbb{R}$, which is defined as
\[
	\hat{\mu}(V) := \inf\{\theta \in \mathbb{R}_{> 0} \colon \{P \in V(\overline{\QQ}) \colon \hat{h}(P) \leq \theta\} \ \text{is Zariski dense in} \ V\}
\] 
for any irreducible subvariety $V \subseteq A_N$.

Now, thanks to our \cref{thm:curves_intro,thm:stronger_general_varieties} we know that for every $\alpha$ which is big enough with respect to the multiprojective degrees of $C$, the curve $f^{-1}(C)$ is transverse. This implies that the curve 
\[D := \Psi(f^{-1}(C))\] 
is also transverse, because $\Psi$ is an automorphism. Hence, we can apply to $D$ a theorem of Galateau \cite{Galateau_2010}, which provides a lower bound for the essential minimum of any transverse subvariety $V \subseteq A_N$. 

More precisely, \cite[Corollaire~1.2]{Galateau_2010} shows that
\begin{equation} \label{ineq:Galateau}
	\hat{\mu}(V) \geq \frac{c_6(A_N)}{\deg(V)^{1/\codim(V)} \log(3 \deg(V))^{\lambda(N,\codim(V))}},
\end{equation}
where $c_6(A_N) \in \mathbb{R}_{> 0}$ is an effectively computable constant depending only on $A_N$, and where one defines $\lambda(N,k) := (5 N (k+1))^{k+1}$.
To make this lower bound more explicit when $V = D$, note that
\begin{equation} \label{ineq:degree_bound}
	\deg(D) \leq 3 N^2 d_L \deg(f^{-1}(C)) \leq 3 N^3 d_L \alpha^{2 (N-r)} \deg(C) \leq 3 N^3 \alpha^{2 (N+1-r)} \deg(C)
\end{equation}
where the first upper bound can be proven using Bézout's classical theorem, as explained in the proof of \cite[Lemma~13.2]{Viada_2008}, while the second upper bound follows form \cref{lem:degree_preimage}, because we set two entries of the diagonal endomorphism $f$ to be equal to one.
Combining \eqref{eq:essential_minimum_image} with \eqref{ineq:Galateau}, with $V = D$, and \eqref{ineq:degree_bound}, we see that 
\begin{equation} \label{ineq:smart_essential_minimum}
	\hat{\mu}(\Phi(C)) = \alpha^2 \hat{\mu}(D) \geq  \frac{c_7(A_N,\deg(C))}{\log(d_L \lvert \alpha \rvert)^{\lambda(N,N-1)}} (\alpha^{2 (r-1)}/d_L)^{\frac{1}{N-1}} \geq \frac{c_7(A_N,\deg(C))}{\log(d_L \lvert \alpha \rvert)^{\lambda(N,N-1)}} \alpha^{\frac{2(r-2)}{N-1}}
\end{equation}
for some constant $c_7(A_N,\deg(C)) \in \mathbb{R}_{> 0}$ which depends on $A_N$ and on the degree of $C$. In particular, if we let $\Phi$ vary by letting $\lvert \alpha \rvert \to +\infty$, while leaving $C$ fixed, we see that the essential minimum of the images $\Phi(C)$ will tend to infinity as a power of $\lvert \alpha \rvert$, unless $r=2$. In this case, if $d_L \gg \alpha^2/\log(\lvert \alpha \rvert)$ then the lower bound for $\hat{\mu}(\Phi(C))$ portrayed in \eqref{ineq:smart_essential_minimum} will decrease as a power of $1/\log(d_L)$.

The lower bound \eqref{ineq:smart_essential_minimum} that we obtained is much better than what would come out of a direct application of Galateau's inequality \eqref{ineq:Galateau} to the curve $\Phi(C)$. More precisely, without further knowledge on $\Phi$, we know only that \[
\deg(\Phi(C)) \leq 3 N^2 \alpha^2 \deg(C),\] 
as follows again from an application of Bézout's classical theorem, which is explained in the proof of \cite[Lemma~13.2]{Viada_2008}. Combining this upper bound with \eqref{ineq:Galateau}, where we set $V = \Phi(C)$, we get the lower bound
\[
	\hat{\mu}(\Phi(C)) \geq c_8(A_N,\deg(C)) \frac{1}{\alpha^{2/(N-1)} (\log\lvert \alpha \rvert)^{\lambda(N,N-1)}}
\]
for some constant $c_8$ depending on $A_N$ and $\deg(C)$. In particular, this lower bound tends to zero as $\lvert \alpha \rvert \to +\infty$, and is evidently seen to be worse than \eqref{ineq:smart_essential_minimum}.

Lower bounds for the essential minimum such as the ones obtained in this subsection are known to be essential in the study of the Torsion Anomalous Conjecture, as shown in \cite{Viada_2008,CVV_trans}.
In particular, let us observe that, up to torsion and to a reordering of the variables, every subgroup $B \subseteq A_N$ of codimension $r \geq 2$ is of the form $B = \ker(\eta)$, where $\eta = ( \mathrm{Id}_r \ \vdots \ L ) \colon A_N \to E_1 \times \dots \times E_r$ is a morphism of abelian varieties defined as the first $r$ rows of the endomorphism $\Phi \colon A_N \to A_N$ introduced in \eqref{eq:Phi}.
Hence, the results obtained in this subsection provide an explicit link between the irreducibility statements proven in this paper and the Torsion Anomalous Conjecture, which will be investigated further in future work.

\end{document}